\newcommand{\rr}{\mathbb{R}}
\newcommand{\nn}{\mathbb{N}}
\renewcommand{\d}{{\rm d}}
\renewcommand{\.}{\cdot}
\newcommand{\e}{\epsilon}
\newcommand{\s}{\sigma}
\newcommand{\test}{\varphi}
\renewcommand{\div}{{\rm div}}
\newtheorem{theorem}{Theorem}
\newtheorem{proposition}[theorem]{Proposition}
\newtheorem{lemma}[theorem]{Lemma}
\begin{document}

\title[Effects of Bohm's potential on a macroscopic system]{On the effects of Bohm's potential on a macroscopic system of self-interacting particles}
\author{Oliver Tse}
\address{Department of Mathematics, University of Kaiserslautern, 67663 Kaiserslautern, Germany}
\email{tse@mathematik.uni-kl.de}
\date{}
\subjclass[2010]{35A01, 35K35, 35K55, 35Q40, 35Q70}
\keywords{Higher order parabolic equations, second order nonlinear parabolic systems, non-negative global solutions, Bohm's potential, self-interaction, singular perturbation}
\begin{abstract}
 We consider an instationary macroscopic system of self-interacting particles with an additional potential, the so called Bohm's potential. We study the existence of non-negative global solutions to the ($4$-th order) system of equations and allude the differences to results obtained for classical models. The problem is considered on a bounded domain up to three space dimension, subject to initial and Neumann boundary condition for the particle density, and Dirichlet boundary condition for the self-interacting potential. Moreover, the initial datum is only assumed to be non-negative and to satisfy a weak integrability condition.
\end{abstract}
\maketitle

\section{Introduction}

Consider a macroscopic system of self-interacting particles with \emph{Bohm's potential}, which describe the evolution of the \emph{normalized density} $n\ge 0$,
\begin{subequations}\label{eq::system}
\begin{equation}
 n_t = \div\big(n\nabla F\big)\;\;\mbox{in}\;\Omega,\quad 
 n\partial_\nu F = 0 \;\;\mbox{on}\;\Gamma,
\end{equation}
with the \emph{quasi fermi-level} $F$ given by
\begin{equation}
 F = -\e^2\frac{\Delta\sqrt{n}}{\sqrt{n}} + \log{n} - \s\Phi \;\;\mbox{in}\;\Omega,\quad
 \partial_\nu\sqrt{n} = 0 \;\;\mbox{on}\;\Gamma,
\end{equation}
and the \emph{potential} $\Phi$ due to self-interaction in a particle system,
\begin{equation}
 -\Delta \Phi = n \;\;\mbox{in}\;\Omega,\quad 
 \Phi = 0 \;\;\mbox{on}\;\Gamma,
\end{equation}
\end{subequations}
supplemented with the initial condition $n(0,\.)=n_0\ge 0$, where $\nu$ is the outer normal to the convex, bounded domain $\Omega\subset\rr^d$, $d\le 3$ with Lipschitz boundary $\Gamma$, $\e>0$ is the scaled Planck constant and $|\s|\in[0,\infty)$ is the mass of the system of self-interacting particles, where sign($\s$) dictates the nature of the interaction involved. In this case, positive mass $\s>0$ would indicate the presence of self-attraction, while negative mass $\s<0$ indicates self-repulsion. Notice that the presence of the Bohm potential $\Delta\sqrt{n}/\sqrt{n}$, which is a non-local second-order term, leads to a fourth-order evolution equation for the normalized density $n$, given by
\begin{subequations}\label{eq::4th_order}
\begin{equation}
 n_t = \div\left(n\nabla \Big(-\e^2\frac{\Delta\sqrt{n}}{\sqrt{n}} + \log{n} - \s\Phi\Big)\right)\;\;\mbox{in}\;\Omega,
\end{equation}
with the natural boundary conditions
\begin{equation}
 n\partial_\nu \big(-\e^2\frac{\Delta\sqrt{n}}{\sqrt{n}} + \log{n} - \s\Phi\big) = 0 \;\;\mbox{on}\;\Gamma,\quad\mbox{and}\quad \partial_\nu\sqrt{n} = 0 \;\;\mbox{on}\;\Gamma.
\end{equation}
\end{subequations}
It is easy to see that the boundary conditions for $n$ imply $\int_\Omega n\,\d{x} = \int_\Omega n_0\,\d{x}$. Therefore, it is sufficient to ensure that $\int_\Omega n_0\,\d{x}=1$ for $n$ to be kept normalized.

Such kind of systems occur typically in the theory of transport equations for semiconductors as a macroscopic limit (quantum drift-diffusion equations) of its microscopic counterpart (Wigner-Boltzman-Poisson system) (c.f.~\cite{ansgar, ansgar2} and references therein). In this setting, $n$ describes the electron density and therefore $\s<0$, since electrons are negatively charged. Instead of (\ref{eq::system}c), the {\em electrostatic potential} $\Phi$ satisfies
\[
 -\Delta\Phi = n - C\;\;\mbox{in}\;\Omega,
\]
where $C$ is the doping profile of the semiconductor device. In the case of no interactions, $F= -\Delta\sqrt{n}/\sqrt{n}$, also called the Derrida-Lebowitz-Speer-Spohn (DLSS) equations \cite{derrida}, the authors in \cite{ugo} and \cite{matthes} successfully proved the existence of solutions and additionally showed the rate of convergence of solutions to the unique stationary solution. Existence results for system (\ref{eq::system}) with Neumann boundary conditions for general Lipschitz domains  were recently proven in \cite{xu} (see also \cite{chen}). In comparison to the proofs in \cite{xu}, we make use of an exponential transformation of the normalized density $n$, which we describe in Section~2. Note that the results in this paper hold also for the case $C\ne 0$, $C\in H^1(\Omega)\cap L^\infty(\Omega)$.

The case $\s>0$ on the other can be thought of as a macroscopic model for semi-classical quantum gravitating systems in flat space, where quantum effects of matter and classical approximation of the gravitational field is considered (c.f.~\cite{winter} and references therein). By passing to the limit $\e\to 0$, we formally recover a model for a system of self-gravitating particles \cite{biler}. This system is well known to have global solutions for $\s\in(-\infty,8\pi)$ and blowup phenomena for large mass $\s>8\pi$ in $d=2$. Another model of the limit system comes from statistical mechanics for vortex points as the mean field limit of the canonical Gibbs measure associated to a $N$-vortex system in a bounded two-dimensional domain \cite{caglioti1, caglioti2}. Here, the authors established the concentration phenomena for the weak limits of the Gibbs measures, when $N\to\infty$, to the solution of the limit system ($\e=0$) in the case $\s\in(-\infty,8\pi)$. Similar systems that manifest blowup phenomena arises in the theory of combustion \cite{bebernes} and chemotaxis equations \cite{suzuki}. In many cases, especially in higher dimensions, if a problem is presented with exponential nonlinearity working against diffusion, blowup phenomena occurs.  The modifications of the coupled matter-gravity problem by quantum mechanics are particularly interesting as they may result in a prevention of gravitational colapse, otherwise inevitable due to the singularity theorems. We note that the techniques used in \cite{xu} for $\s<0$ may not be directly applicable in this case.

The main objective of this paper is to show that the presence of the Bohm potential ($\e>0$) leads to a regularization of the limit problem ($\e=0$), and therefore does indeed prevent gravitational colapse ($\s>0$) as postulated. This would imply that the presence of the Bohm potential prevents blowup phenomena. More precisely, it is shown that (\ref{eq::system}) possesses at least one global weak solution for any $\s\in\rr$. A result on stationary solutions to (\ref{eq::system}) for arbitrary $\s\in\rr$ can be found in \cite{tse}. There it is also shown that stationary solutions with $\e>0$, $d=2$ and $\s\in(-\infty,8\pi)$, converge in the weak sense to stationary solutions of the limit problem ($\e=0$).

For notational convenience, we set
\[
 V := \left\{u\in H^2(\Omega)\;|\; \partial_\nu u=0\;\;\mbox{on}\;\Gamma\right\},\quad V_0:= H^2(\Omega)\cap H_0^1(\Omega),
\]
and denote the space of non-negative integrable functions with finite entropy $\mathcal{E}$ by
\[
 \mathcal{P} := \left\{ u\in L^1(\Omega)\;|\; u\ge 0,\; \int_\Omega u\,\d{x}=1,\; \mathcal{E}(u)<+\infty \right\},
\]
where $\mathcal{E}$ is simply the (negative) \emph{physical entropy} given by
\[
 \mathcal{E}(u) := \int_\Omega \big(u(\log u - 1) + 1 \big)\,\d{x} \ge 0.
\]
The main result of this paper is the following.

\begin{theorem}\label{thm::existence}
 For any finite $T>0$ and $n_0\in\mathcal{P}$, there exists at least one weak solution $(F,n,\Phi)$ to system (\ref{eq::system}), with 
 \begin{align*}
  n\in W^{1,1}(0,T;V^*),\quad \sqrt{n}\in L^2(0,T;V),\hspace*{1.6em}\\
  F\sqrt{n}\in L^2(0,T;L^2(\Omega))\quad\mbox{and}\quad\Phi\in L^1(0,T;V_0),
 \end{align*}
 and additionally $n\ge 0$ a.e.~in $(0,T)\times\Omega$, satisfying
 \begin{align*}
  \langle n_t,\test_1\rangle_{V^*,V} &= \int_\Omega F\sqrt{n}\big(\sqrt{n}\Delta\test_1 + 2\nabla \sqrt{n}\.\nabla \test_1\big)\d{x} \\
  \int_\Omega F\sqrt{n}\,\test_2\,\d{x} &= \int_\Omega \big(-\e^2\Delta\sqrt{n} + \sqrt{n}\log n - \s\sqrt{n}\,\Phi\big)\d{x} \\
  \int_\Omega \nabla \Phi\.\nabla \test_3\,\d{x} &= \int_\Omega n\,\test_3\,\d{x}
 \end{align*}
  for a.e.~$t\in(0,T)$ and all $\test=(\test_1,\test_2,\test_3)\in V\times L^2(\Omega)\times H^1_0(\Omega)$.
\end{theorem}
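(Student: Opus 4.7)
The plan is to combine the exponential transformation $n=e^u$ (referred to in Section~2) with an implicit-Euler time discretization. The substitution makes nonnegativity of $n$ automatic; moreover, via the identity $n\nabla F=\sqrt n\,\nabla(\sqrt n\, F)-F\sqrt n\,\nabla\sqrt n$, the natural object in the weak formulation of Theorem~\ref{thm::existence} is the product $F\sqrt n$ rather than $F$ or $\nabla F$ themselves---which have no independent Sobolev meaning since the Bohm term is singular where $\sqrt n$ vanishes.

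I would fix $\tau=T/N$ and, given $n_{k-1}=e^{u_{k-1}}\in\mathcal P$, solve the stationary problem: find $(u_k,\Phi_k)\in H^1(\Omega)\times V_0$ satisfying
\[
 \frac{e^{u_k}-e^{u_{k-1}}}{\tau}=\div\big(e^{u_k}\nabla F_k\big),\quad F_k=-\e^2\frac{\Delta e^{u_k/2}}{e^{u_k/2}}+u_k-\s\Phi_k,\quad -\Delta\Phi_k=e^{u_k},
\]
with the corresponding boundary data. Existence at each step can be obtained by Galerkin truncation in $u$ together with a Schauder fixed point applied to $\Phi\mapsto\Phi[u(\Phi)]$. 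Testing the first equation against $F_k$ and combining the convexity inequality $\int(e^{u_k}-e^{u_{k-1}})u_k\,\d x\ge \mathcal E(n_k)-\mathcal E(n_{k-1})$ with discrete chain-rule identities for the Bohm and Poisson terms then yields a dissipation of the form
\[
 \mathcal F(n_k,\Phi_k)+\tau\int_\Omega n_k|\nabla F_k|^2\,\d x\le \mathcal F(n_{k-1},\Phi_{k-1}),
\]
where $\mathcal F(n,\Phi):=\mathcal E(n)+\tfrac{\e^2}{2}\|\nabla\sqrt n\|_2^2-\tfrac{\s}{2}\|\nabla\Phi\|_2^2$. For $\s\le 0$ this functional is already coercive. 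The principal obstacle is the case $\s>0$, where the interaction term carries the wrong sign: I would use $-\Delta\Phi=n$, the Sobolev embedding $H^1\hookrightarrow L^6$ (valid in $d\le 3$), a Gagliardo--Nirenberg interpolation and the mass constraint $\|\sqrt n\|_{L^2}=1$ to prove $\|\nabla\Phi\|_2^2\le\delta\|\nabla\sqrt n\|_2^2+C_\delta$ for arbitrarily small $\delta>0$, thereby absorbing the indefinite term into the Bohm energy. This is the quantitative expression of the regularizing effect of Bohm's potential highlighted in the introduction, and it is precisely here that the restriction $d\le 3$ enters---in higher dimensions the Gagliardo--Nirenberg exponent would exceed one, absorption would fail, and blowup phenomena are expected, consistent with the $\e=0$ limit.

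Together with elliptic regularity applied to the pointwise identity $F_k\sqrt{n_k}=-\e^2\Delta\sqrt{n_k}+\sqrt{n_k}\log n_k-\s\sqrt{n_k}\Phi_k$, the dissipation yields uniform bounds $\sqrt{n^\tau}\in L^\infty(0,T;H^1)\cap L^2(0,T;H^2)$, $F^\tau\sqrt{n^\tau}\in L^2(0,T;L^2(\Omega))$ and $\Phi^\tau\in L^\infty(0,T;V_0)$. The discrete weak formulation then controls $\partial_t\tilde n^\tau$ in $L^1(0,T;V^*)$, and a standard Aubin--Lions compactness argument (exploiting the $H^2$ spatial bound on $\sqrt{n^\tau}$) delivers strong convergence $\sqrt{n^\tau}\to\sqrt n$ in $L^2(0,T;H^1(\Omega))$ along a subsequence. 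Combined with elliptic compactness for $\Phi^\tau$ and the entropy-driven uniform integrability of $n\log n$, this suffices to pass to the limit in all three weak equations. Nonnegativity of $n$ is preserved by the exponential ansatz, and the claimed function-space memberships follow directly from the uniform estimates.
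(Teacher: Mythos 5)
Your overall architecture (exponential substitution, implicit Euler, fixed point per time step, compactness, limit passage) matches the paper, but the a priori estimate you build everything on is the wrong one, and this creates a genuine gap. You test the discrete equation with $F_k$ and obtain the free-energy dissipation $\mathcal F(n_k,\Phi_k)+\tau\int_\Omega n_k|\nabla F_k|^2\,\d x\le\mathcal F(n_{k-1},\Phi_{k-1})$. Even granting this, it only controls $\nabla\sqrt{n^{(\tau)}}$ in $L^\infty(0,T;L^2)$ and $\sqrt{n^{(\tau)}}\,\nabla F^{(\tau)}$ in $L^2((0,T)\times\Omega)$; it does \emph{not} give $F^{(\tau)}\sqrt{n^{(\tau)}}\in L^2(L^2)$, since a weighted Poincar\'e inequality with the possibly degenerate weight $n$ is not available, and it does not give $\sqrt{n^{(\tau)}}\in L^2(0,T;H^2)$ either: your appeal to ``elliptic regularity applied to the pointwise identity $F_k\sqrt{n_k}=-\e^2\Delta\sqrt{n_k}+\dots$'' is circular, because you need $F_k\sqrt{n_k}\in L^2(\Omega)$ to conclude $\Delta\sqrt{n_k}\in L^2(\Omega)$ and vice versa. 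Without the $H^2$ bound the Bohm term has no $L^2$ meaning, the claimed memberships $F\sqrt n\in L^2(L^2)$, $\sqrt n\in L^2(0,T;V)$ are unproved, and the limit passage in the second weak equation (and in the first, which needs strong convergence of $\nabla\sqrt{n^{(\tau)}}$ and weak $L^2$ convergence of $F^{(\tau)}\sqrt{n^{(\tau)}}$) collapses. The paper avoids this by testing with $\log n$ instead: the entropy dissipation produces the quantity $\int\rho^2\big[|\nabla^2\rho/\rho|^2+(\Delta\rho/\rho)|\nabla\rho/\rho|^2-\delta|\nabla\rho/\rho|^4\big]$, and the nontrivial algebraic inequalities of Propositions~\ref{prop::n2inequality} and \ref{prop::dummy} (the entropy--dissipation machinery with the ``dummy'' integration by parts) show that this, even after absorbing the attractive term $\s\int n^2$, dominates $\gamma\|\nabla^2\rho\|_2^2$. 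That is precisely the estimate (\ref{lem::discrete::eq::bound}) your scheme is missing, and it is also what drives the Leray--Schauder bound for the fixed point and the compactness in $\tau$.

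Two further points. First, your discrete energy inequality itself is questionable for $\s>0$: the interaction part $-\tfrac{\s}{2}\|\nabla\Phi\|_2^2$ is a \emph{concave} functional of $n$, so the implicit-Euler convexity trick runs the wrong way and leaves an uncontrolled remainder $-\tfrac{\s}{2}\|\nabla(\Phi_k-\Phi_{k-1})\|_2^2$ on the dissipative side; controlling it would require bounds ($\|n_k\|_\infty$-type) you do not have at that stage. Your absorption estimate $\|\nabla\Phi\|_2^2\le\delta\|\nabla\sqrt n\|_2^2+C_\delta$ for $d\le3$ under $\|n\|_1=1$ is correct as stated, but it addresses the coercivity of $\mathcal F$, not the missing $H^2$ dissipation. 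Second, the per-step solvability is glossed over: with $w\in\mathcal P$ only in $L\log L$ and the coefficient $n$ a priori allowed to degenerate, the paper needs the $L\log L$ elliptic result (Proposition~\ref{prop::LlogL}), the natural-gradient-growth result (Proposition~\ref{prop::natural_gradient}) and a Harnack argument to get the uniform positive lower bound on $n$; a bare ``Galerkin plus Schauder'' does not by itself produce a fixed-point map on a space where these degeneracies are under control.
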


The proof of Theorem~\ref{thm::existence} relies on the fact that the physical entropy $\mathcal{E}$ provides a controlled growth estimate for (\ref{eq::4th_order}). Indeed, by formally multiplying (\ref{eq::4th_order}a) with $\log(n)$, integrating over $\Omega$ and integrating by parts, we obtain
\[
 \frac{\d}{\d{t}}\mathcal{E}(n) + c_0\int_\Omega \left|\nabla^2\sqrt{n}\right|^2\,\d{x} \le c_1,
\]
for some constants $c_0,c_1>0$, independent of $n$, which is essentially due to Propositions~\ref{prop::n2inequality} and \ref{prop::dummy}. By using well-known interpolation inequalities on derivatives \cite{adams}, one obtains constants $\delta>0$ and $c(\delta)>0$ such that
\[
 \|\nabla \sqrt{n}\|_2^2 \le \delta \|\nabla^2 \sqrt{n}\|_2^2 + c(\delta)\|\sqrt{n}\|_2^2,
\]
along with the logarithmic-Sobolev inequality \cite{stroock}
\[
 \int_\Omega n\log\Big(\frac{n}{\|\sqrt{n}\|_2^2}\Big)\,\d{x} \le c_L\|\nabla \sqrt{n}\|_2^2,
\]
where $c_L>0$ only depends on $\Omega$ and $d$, and the fact that $\|n\|_1=1$, we further obtain
\[
 \frac{\d}{\d{t}}\mathcal{E}(n) + c_2\mathcal{E}(n) \le c_3,
\]
for some constants $c_2,c_3>0$. An application of Grownwall's lemma on this estimate provides the global boundedness in time $t$ of $\mathcal{E}(n)$, and consequently shows (formally) the absence of a blowup phenomena in the space $\mathcal{P}$.

The strategy for a rigorous prove of this statement involves first showing existence of solutions for a time-discrete problem with time step $\tau>0$ with the help of Leray--Schauder's fixed point theorem. Section~\ref{sec::prelim} is devoted to recall results on elliptic equations required for the time-discrete problem. In Section~\ref{sec::discrete} we establish an important uniform entropy estimate (c.f.~(\ref{lem::discrete::eq::estimates})), which leads to the solvability of the time-discrete problem. Consequently, by establishing uniform bounds on the sequence of solutions $\{n^{(\tau)}\}$ with respect to $\tau$ in Section~\ref{sec::uniform}, we may then extract a subsequence, which converges to a solution of (\ref{eq::system}) when passing to the limit $\tau\to 0$. This final step is shown in Section~\ref{sec::limit}. 

\section{Preliminary results}\label{sec::prelim}

In this section we recall several standard results regarding the unique solvability and regularity of solutions for elliptic equations. We begin by recalling a well known interpolation theorem for Sobolev spaces, namely the Gagliardo--Nirenberg--Sobolev inequalities \cite{zeidler}.

\begin{proposition}\label{prop::gagliardo} 
 Let $\Omega\subset\rr^d$ be a bounded Lipschitz domain, $m\in\nn$ and $1\le p$, $q$, $r\le\infty$. Then there exists a constant $c>0$, such that
 \[
  \|D^{\alpha}u\|_r \le c\,\|u\|_{m,p}^\theta\|u\|_q^{1-\theta}\quad\mbox{for any}\;\; u\in W^{m,p}(\Omega)\cap L^q(\Omega),
 \]
 provided that $0\le |\alpha|\le m-1$, $\theta=|\alpha|/m$ or $|\alpha|-d/r=\theta(m-d/p)-(1-\theta)d/q$. If $m-|\alpha|-d/p\ne \nn_0$, then the values $|\alpha|/m \le \theta \le 1$ are allowed.
\end{proposition}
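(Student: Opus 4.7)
The plan is to reduce the inequality on the bounded Lipschitz domain $\Omega$ to the corresponding inequality on $\rr^d$ via a Sobolev extension operator, and then prove the whole-space statement by a combination of interpolation and Sobolev embedding. Since $\partial\Omega$ is Lipschitz, Stein's extension theorem provides a bounded linear operator $E\colon W^{m,p}(\Omega)\to W^{m,p}(\rr^d)$ which is simultaneously bounded $L^q(\Omega)\to L^q(\rr^d)$. Granting the whole-space inequality applied to $Eu$, restriction to $\Omega$ gives
\[
 \|D^\alpha u\|_{L^r(\Omega)} \le \|D^\alpha Eu\|_{L^r(\rr^d)} \le c\,\|Eu\|_{W^{m,p}(\rr^d)}^\theta\|Eu\|_{L^q(\rr^d)}^{1-\theta} \le c'\,\|u\|_{m,p}^\theta\|u\|_q^{1-\theta},
\]
with a constant depending only on $\Omega$, $m$, $p$, $q$, $r$, and $|\alpha|$.

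For the whole-space inequality I would follow Nirenberg's classical scheme. First, the one-dimensional estimate $\|u'\|_{L^r(\rr)}\le c\,\|u''\|_{L^p(\rr)}^{1/2}\|u\|_{L^q(\rr)}^{1/2}$, and its natural analogue for higher-order derivatives, is obtained by integration by parts combined with H\"older's inequality on truncated, smoothly approximated $u$, followed by a density argument for $C_c^\infty$ functions. The multidimensional estimate is then recovered by applying the one-dimensional inequality successively along each coordinate variable and by chaining it with the Sobolev embedding $W^{m,p}(\rr^d)\hookrightarrow L^{s}(\rr^d)$ for admissible $s$. The exponent condition $|\alpha|-d/r=\theta(m-d/p)-(1-\theta)d/q$ is forced by scaling: substituting $u_\lambda(x)=u(\lambda x)$ into both sides and demanding identical homogeneity in $\lambda$ yields exactly this dimensional identity, which also explains why $\theta=|\alpha|/m$ is the unique admissible value in the generic case and why a genuine interval of $\theta$ becomes available only when the scaling degeneracy does not occur.

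The main obstacle, and the reason for the exceptional assumption $m-|\alpha|-d/p\notin\nn_0$, is that at those borderline values the Sobolev embeddings used in the interpolation chain degenerate (one obtains only an embedding into $\mathrm{BMO}$ rather than $L^\infty$), so the full range $|\alpha|/m\le\theta\le 1$ is no longer accessible and only the endpoint $\theta=|\alpha|/m$ survives unconditionally. Since this is a standard functional-analytic interpolation statement, I would ultimately treat it as a black-box tool and refer to \cite{zeidler} for the complete technical proof, while highlighting the two essential ingredients—the Lipschitz extension operator and the scaling-based determination of admissible exponents—that make the inequality valid on a bounded Lipschitz domain in the form stated.
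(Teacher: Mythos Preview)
The paper does not prove this proposition at all; it is stated as a recalled result with a citation to \cite{zeidler} and used as a black box throughout. Your proposal therefore goes further than the paper: you outline the standard reduction via Stein's extension operator for Lipschitz domains together with Nirenberg's one-dimensional iteration and scaling argument on $\rr^d$, and you correctly identify why the exceptional case $m-|\alpha|-d/p\in\nn_0$ restricts the admissible range of $\theta$. Since both you and the paper ultimately defer the full technical details to \cite{zeidler}, your treatment is consistent with---and in fact more informative than---the paper's own handling of this statement.
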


We also recall a regularity result for linear elliptic problems on convex, bounded domains due to \cite{grisvard}.
\begin{proposition}\label{prop::H^2}
 Let $\Omega\subset\rr^d$ be a convex, bounded domain and $f\in L^2(\Omega)$. Then the homogeneous Dirichlet problem
 \[
  -\Delta u = f\;\;\mbox{in}\;\Omega,\quad u=0\;\;\mbox{on}\;\Gamma,
 \]
 possesses a unique weak solution $u\in V_0$, which satisfies the estimate
 \[
  \|u\|_{2,2} \le c\|\Delta u\|_2,
 \]
 for some constant $c>0$, depending only on the diameter of $\Omega$.
\end{proposition}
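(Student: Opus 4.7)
The proof splits naturally into two parts: existence and uniqueness of a variational solution in $H^1_0(\Omega)$, and the $H^2$-regularity together with the global estimate, which is where convexity enters.

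For existence and uniqueness I would apply the Lax--Milgram theorem to the bilinear form $a(u,v)=\int_\Omega \nabla u\.\nabla v\,\d{x}$ on $H^1_0(\Omega)$. Continuity is immediate, and coercivity follows from Poincar\'e's inequality, whose constant depends only on $\mathrm{diam}\,\Omega$. Since $f\in L^2(\Omega)\hookrightarrow H^{-1}(\Omega)$, this yields a unique weak solution $u\in H^1_0(\Omega)$ satisfying $-\Delta u=f$. Testing with $u$ and invoking Poincar\'e gives $\|u\|_{1,2}\le c\|\Delta u\|_2$, so the only remaining task is the Hessian bound $\|D^2 u\|_2 \le c\|\Delta u\|_2$.

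For the Hessian estimate I would first work on smooth convex domains. A double integration by parts applied to $u\in C^\infty(\overline{\Omega})\cap H^1_0(\Omega)$ produces a Rellich-type identity of the form
\[
 \int_\Omega |\Delta u|^2\,\d{x} = \int_\Omega |D^2 u|^2\,\d{x} + \int_\Gamma \kappa\,(\partial_\nu u)^2\,\d\sigma,
\]
where $\kappa$ is a curvature-type quantity of $\Gamma$ (it reduces to the second fundamental form evaluated on $(\nabla u, \nabla u)$, exploiting that $\nabla u$ is purely normal on $\Gamma$ since $u$ vanishes there). Convexity forces $\kappa\ge 0$ pointwise on $\Gamma$, whence $\|D^2 u\|_2\le \|\Delta u\|_2$, and a density argument extends this to all $u\in V_0$ on smooth convex $\Omega$.

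For a general convex bounded Lipschitz $\Omega$ I would approximate by smooth convex subdomains $\Omega_n\subset\Omega$ (for instance level sets of a mollified distance function), with $\mathrm{diam}\,\Omega_n\le\mathrm{diam}\,\Omega$, solve the Dirichlet problem on each $\Omega_n$ with the restricted data, apply the previous estimate uniformly in $n$, and pass to the limit weakly in $H^2$. The main obstacle will be this last step: the second fundamental form is not classically defined on a Lipschitz boundary, so the sign condition in the Rellich identity has to be transferred through the approximation rather than read off directly, and one must check that the weak $H^2$-limit, extended by zero outside each $\Omega_n$, coincides with the weak solution on the full $\Omega$ and retains the Dirichlet trace. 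Combining the three resulting bounds on $u$, $\nabla u$ and $D^2 u$ then yields $\|u\|_{2,2}\le c\|\Delta u\|_2$ with $c$ depending only on $\mathrm{diam}\,\Omega$.
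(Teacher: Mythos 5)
The paper does not actually prove this proposition: it is recalled as a known regularity result and attributed to Grisvard, and your argument is essentially the classical proof from that reference --- Lax--Milgram with Poincar\'e (constant controlled by $\mathrm{diam}\,\Omega$) for existence and uniqueness, the Rellich-type identity yielding $\|D^2u\|_2\le\|\Delta u\|_2$ on smooth convex domains, and exhaustion of a general convex domain by smooth convex subdomains with a weak $H^2$ passage to the limit (where, as you note, one must identify the limit with the solution on $\Omega$ and keep the zero trace; this is exactly how the cited source handles it). One small correction: since $u=0$ on $\Gamma$ makes $\nabla u$ purely normal, the boundary density in the Rellich identity is $(\mathrm{tr}\,\mathcal{B})\,(\partial_\nu u)^2$, the trace of the second fundamental form (sum of principal curvatures) times the squared normal derivative; the second fundamental form acts on tangential vectors, so ``evaluated on $(\nabla u,\nabla u)$'' is not the right description, but convexity still gives nonnegativity of this term, so your sign argument and the resulting estimate are unaffected.
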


The next result we want to recall is a regularity result for linear elliptic equations with homogeneous Neumann boundary conditions and source terms from the Orlicz space $L\log L(\Omega)$ (c.f.~\cite{brezis, stamm}). For the sake of completeness, we include a simple proof for this result in Appendix~\ref{appendix::LlogL}.

\begin{proposition}\label{prop::LlogL}
 Let $\Omega\subset\rr^d$ be a bounded Lipschitz domain, $f\in L\log L(\Omega)$ and $a\in L^\infty(\Omega)$ with $a\ge \alpha>0$ a.e.~in $\Omega$. Then the elliptic equation
 \begin{equation}\label{eq::LlogL}
  -\div\big( a\nabla u \big) = f\;\;\mbox{in}\;\Omega,\quad \partial_\nu u=0\;\;\mbox{on}\;\Gamma,
 \end{equation}
 has a unique weak solution $u\in W_\beta$, with $(1 +|u|)^{1/2}\in H^1(\Omega)$, where
 \[
  W_\beta := \left\lbrace u\in W^{1,\frac{d}{d-1}}(\Omega)\;|\; \frac{1}{|\Omega|}\int_\Omega u\,\d{x}=\beta \right\rbrace\quad\mbox{with}\quad \|u\|_{W_\beta} := \|\nabla u\|_{\frac{d}{d-1}},
 \]
 and $\beta\in\rr$ is some given constant.
\end{proposition}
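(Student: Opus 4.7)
The plan is to adapt the classical Boccardo--Gallou\"et approach for linear elliptic equations with $L^1$ data, sharpened to extract the extra $H^1$ regularity of $(1+|u|)^{1/2}$ from the $L\log L$ integrability of the source. Testing the equation against the constant $1$ forces the compatibility condition $\int_\Omega f\,\d{x}=0$, which I take as implicit (or enforce by subtracting the mean of $f$); the prescribed mean $\beta$ then fixes the additive constant intrinsic to the Neumann problem, so by translation I may assume $\beta=0$.

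First I would approximate by truncation: set $f_n := T_n(f)\in L^\infty(\Omega)$, re-centered to have zero mean, so that $f_n\to f$ in the Orlicz modular sense. The bilinear form $(u,v)\mapsto\int_\Omega a\nabla u\.\nabla v\,\d{x}$ is bounded and, by Poincar\'e's inequality together with $a\ge\alpha>0$, coercive on the closed subspace $\{u\in H^1(\Omega):\int_\Omega u\,\d{x}=0\}$; the Lax--Milgram lemma thus produces a unique $u_n\in H^1(\Omega)$.

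The heart of the argument consists of two $n$-independent a priori estimates. For the $W^{1,d/(d-1)}$ bound, the standard Boccardo--Gallou\"et truncation argument (testing with $\mathrm{sgn}(u_n)\min(|u_n|,k)$ and employing a layer-cake/duality argument) yields $\|\nabla u_n\|_{d/(d-1)}\le c\|f\|_1$. For the refined regularity, testing the equation with the logarithmic function $\mathrm{sgn}(u_n)\log(1+|u_n|)$ gives
\[
 4\alpha\,\|\nabla (1+|u_n|)^{1/2}\|_2^2 \le \int_\Omega |f_n|\log(1+|u_n|)\,\d{x},
\]
and Young's inequality for the Orlicz complementary pair $\Phi(s)=(1+s)\log(1+s)-s$, $\Psi(t)=e^t-t-1$, applied with $t=\log(1+|u_n|)$, bounds the right-hand side by $c(\|f\|_{L\log L}+\|u_n\|_1)$; the $L^1$-term is in turn controlled by $\|\nabla u_n\|_{d/(d-1)}$ via Poincar\'e--Sobolev, closing the loop.

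The uniform bounds then produce, along a subsequence, $\nabla u_n\rightharpoonup \nabla u$ weakly in $L^{d/(d-1)}(\Omega)$, $(1+|u_n|)^{1/2}\rightharpoonup (1+|u|)^{1/2}$ weakly in $H^1(\Omega)$, $u_n\to u$ strongly in $L^q(\Omega)$ and a.e., and $f_n\to f$ strongly in $L^1(\Omega)$ by dominated convergence in the Orlicz modular, so the weak formulation passes to the limit against test functions in $W^{1,d}(\Omega)$. Uniqueness within the class $W_\beta\cap\{u:(1+|u|)^{1/2}\in H^1(\Omega)\}$ follows from linearity and a standard duality argument \`a la Stampacchia (equivalently, the solution is the unique SOLA). \emph{The main obstacle} is the sharp Orlicz--Young balancing, which must ensure the $\log(1+|u_n|)$ factor produced by the test function feeds back only as $L\log L$ data on the right-hand side rather than demanding extra gradient energy on the left; in addition, the non-smooth logarithmic test function needs to be justified through a careful density/approximation argument.
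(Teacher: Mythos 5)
Your overall architecture (truncate the datum, solve by Lax--Milgram on the zero-mean subspace, derive uniform bounds, pass to the limit) is viable, and your central estimate is the same as the paper's: testing with $\mathrm{sign}(u)\log(1+|u|)$ and using Young's inequality for the complementary Orlicz pair $\Phi(s)=(1+s)\log(1+s)-s$, $\Psi(t)=e^t-t-1$, so that the right-hand side is controlled by the $L\log L$ modular of $f$ plus $\|u\|_1$, yielding the $H^1$ bound on $(1+|u|)^{1/2}$. (The paper instead proves an a priori bound for the solution operator on $L^2$ data and extends it by density/BLT, which avoids the limit passage in the weak formulation; you are also right that $\int_\Omega f\,\d{x}=0$ is an implicit compatibility condition.) However, the step you invoke for the $W^{1,\frac{d}{d-1}}$ bound contains a genuine error: the Boccardo--Gallou\"et truncation argument with $L^1$ data does \emph{not} give the endpoint estimate $\|\nabla u_n\|_{\frac{d}{d-1}}\le c\|f\|_1$; it only gives the Marcinkiewicz bound $\nabla u_n\in L^{\frac{d}{d-1},\infty}$, equivalently $\|\nabla u_n\|_q\le c(q)\|f\|_1$ for $q<\frac{d}{d-1}$. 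The endpoint is false in general: already for $-\Delta u_n=f_n$ with $f_n$ a mollified Dirac mass of unit $L^1$ norm, the gradients behave like $|x|^{1-d}$, which fails to lie in $L^{\frac{d}{d-1}}$ near the singularity, so $\|\nabla u_n\|_{\frac{d}{d-1}}\to\infty$ while $\|f_n\|_1=1$. Hence the uniform $W_\beta$ bound cannot come from $\|f\|_1$ alone; it must come from the $L\log L$ structure.

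The fix is precisely the paper's auxiliary estimate, so your scheme is repairable within its own architecture: writing $\theta(u)=2(1+|u|)^{1/2}$, one has $|\nabla u|=(1+|u|)^{1/2}|\nabla\theta(u)|$, so H\"older gives $\|\nabla u\|_{\frac{d}{d-1}}\le\|1+|u|\|_{\frac{d}{d-2}}^{1/2}\|\nabla\theta(u)\|_2$, and the Sobolev embedding $W^{1,\frac{d}{d-1}}(\Omega)\hookrightarrow L^{\frac{d}{d-2}}(\Omega)$ together with Young's inequality lets you absorb the $u$-dependent factor; thus the endpoint gradient bound is a \emph{consequence} of the $H^1$ bound on $\theta(u)$, not an input to it. To close your loop you only need $\|u_n\|_1$, for which any sub-endpoint exponent $q$ (or the weak norm) suffices, or the scaling parameter $k$ used in the paper to absorb the $\int_\Omega(1+|u|)\,\d{x}$ term. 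A second, smaller soft spot is uniqueness: Stampacchia duality needs the dual problem $-\div(a\nabla v)=g$ to admit $\nabla v\in L^d(\Omega)$, which for merely bounded measurable $a$ is unavailable in $d=3$ (Meyers only yields $L^{2+\e'}$ for some small $\e'>0$, and Serrin-type pathological solutions show that distributional uniqueness in $W^{1,\frac{d}{d-1}}$ can genuinely fail); uniqueness is more safely obtained as uniqueness of the limit of approximations, i.e.\ of the continuously extended solution operator---which is exactly what the paper's density argument delivers---or by an argument that genuinely exploits the constraint $(1+|u|)^{1/2}\in H^1(\Omega)$ rather than bare linearity.
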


Another result we will need is an existence and regularity result regarding nonlinear elliptic equations with natural gradient growth \cite{tse} (c.f.~\cite{dall, ferone}).

\begin{proposition}\label{prop::natural_gradient}
 Let $\Omega\subset\rr^d$ be a bounded Lipschitz domain and $f\in L^p(\Omega)$ with $p>d/2$. Then the elliptic equation
 \[
  -\frac{\e^2}{2}\left(\Delta y + \frac{1}{2}|\nabla y|^2\right) + y = f\;\;\mbox{in}\;\;\Omega,\quad 
  \partial_\nu y=0\;\;\mbox{on}\;\;\Gamma,
 \]
 has a weak solution $y\in H^1(\Omega)\cap L^\infty(\Omega)$. Furthermore, $e^{y/2}\in H^1(\Omega)\cap L^\infty(\Omega)$.
\end{proposition}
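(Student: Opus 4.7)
The plan is to reduce the nonlinear equation to a semilinear one via the Cole--Hopf-type substitution $u = e^{y/2}$. A direct computation yields $\Delta y + \tfrac{1}{2}|\nabla y|^2 = 2\Delta u / u$, so, with $y = 2\log u$, the problem formally becomes
\[
 -\e^2\Delta u + 2u\log u = fu \quad\mbox{in}\;\Omega,\qquad \partial_\nu u = 0 \quad\mbox{on}\;\Gamma,
\]
and the strategy reduces to producing a weak solution $u$ with a two-sided bound $0 < c \le u \le C$ almost everywhere. The transformation is attractive because it trades the natural quadratic gradient growth in $y$ for the coercive, one-signed-at-infinity nonlinearity $u\log u$.

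I would construct $u$ variationally as a nonnegative minimizer on $H^1(\Omega)$ of
\[
 J(u) := \int_\Omega\Big(\tfrac{\e^2}{2}|\nabla u|^2 + u^2\log|u| - \tfrac{1}{2}u^2 - \tfrac{1}{2}f u^2\Big)\,\d x,
\]
whose formal Euler--Lagrange equation is exactly the reduced problem. The pointwise bound $s^2\log|s| \ge -(2e)^{-1}$ together with the Sobolev embedding $H^1(\Omega)\hookrightarrow L^{2p/(p-1)}(\Omega)$, which is valid in dimension $d\le 3$ precisely because $p > d/2$, provides coercivity once $|\int f u^2\,\d x|$ is controlled via H\"older. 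Weak lower semicontinuity is standard for the gradient and quadratic terms; for the $u^2\log|u|$ term one combines Fatou with the compact Sobolev embedding into $L^s$ for an appropriate $s$. Since $J(|u|)=J(u)$, a minimizer $u_*\ge 0$ can be chosen, and it is a nonnegative weak solution to the reduced equation.

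The core remaining step is the two-sided bound on $u_*$. For the upper bound I would run a Stampacchia/Moser iteration on the reduced equation: testing with $(u_*-K)_+^{2k+1}$ and using $u_*\log u_* \ge u_*\log K$ on $\{u_*>K\}$, the contribution of $f u_*$ is absorbed into the good terms once $K$ is chosen large depending on $\|f\|_p$; iterating along a geometric sequence of levels yields $\|u_*\|_\infty < \infty$. For a strictly positive lower bound one may work with $v=1/u_*$ or test against $-(u_*-\delta)_-$, exploiting that $u_*\log u_*$ is large and negative near zero and hence again produces a sign-definite contribution. To handle the degeneracy of $s\mapsto s\log s$ at $s=0$ rigorously, it seems cleanest to first regularize the nonlinearity (e.g.\ replace $u\log u$ by $u\log(u+\delta)$), carry out the variational scheme and the iteration uniformly in $\delta$, and then pass to the limit $\delta\to 0$. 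Once the two-sided bound $0<c\le u_*\le C$ is in hand, $y := 2\log u_* \in H^1(\Omega)\cap L^\infty(\Omega)$ and $\nabla y = 2\nabla u_*/u_* \in L^2(\Omega)$, so that $y$ solves the original equation weakly. The main obstacle is the strictly positive lower bound on $u_*$: the original equation has no obvious sub/supersolution structure, and the degenerate nonlinearity $s\log s$ must be tamed by regularization before any iteration scheme can be carried out.
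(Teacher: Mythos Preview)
The paper does not actually prove this proposition: it is quoted as a known result with the sentence ``Another result we will need is an existence and regularity result regarding nonlinear elliptic equations with natural gradient growth \cite{tse} (c.f.~\cite{dall, ferone}),'' and no argument is given. So there is nothing to compare your attempt against line by line.

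That said, your route is a natural one and in fact is the transformation the paper itself exploits elsewhere: the substitution $u=e^{y/2}$ (equivalently $\rho=\sqrt{n}$) is precisely what is used in the proof of Lemma~\ref{lem::discrete} to upgrade regularity, and the test function $\mathrm{sign}(y)(e^{|y|}-1)$ used there is in the same spirit. The cited references \cite{dall,ferone} treat general equations with natural gradient growth by truncation/approximation rather than by an exponential change of variables, so your approach is more tailored to the specific structure $\Delta y+\tfrac12|\nabla y|^2$ than those.

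Two remarks on your outline. First, your coercivity claim needs one more step: after H\"older you should interpolate $\|u\|_{2p/(p-1)}^2\le \delta\|u\|_{H^1}^2+C_\delta\|u\|_2^2$ (valid for $p>d/2$ in every dimension, not only $d\le 3$), absorb the gradient part, and then use that $s\mapsto s^2(\log|s|-C)$ is bounded below pointwise to control the remaining $\|u\|_2^2$ contribution. Second, the positive lower bound---which you rightly flag as the delicate point---drops out without a separate iteration once the upper bound $u_*\le M$ is known: from $-\e^2\Delta u_*=u_*(f-2\log u_*)\ge u_*(f-2\log M)$ one sees that $u_*\ge 0$ is a supersolution of a linear equation with zeroth-order coefficient in $L^p$, $p>d/2$, so the weak Harnack inequality/strong maximum principle of Serrin--Trudinger (the same tool invoked later in the paper for $\rho$) forces either $u_*\equiv 0$ or $u_*>0$; the former is excluded since $J(u_*)<0=J(0)$. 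This is cleaner than testing with $-(u_*-\delta)_-$ and sidesteps the regularization you propose.
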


\section{Existence of a time-discrete solution}\label{sec::discrete}

Let $\tau>0$ an arbitrary but fixed time step and $w\in\mathcal{P}$ be a given function. The task at hand is to find a weak solution $(F,\sqrt{n},\Phi)\in W_\beta\times V\times V_0$, for some $\beta\in\rr$, to the semi-discrete system
\begin{subequations}\label{eq::discrete}
 \begin{align}
  -\div\big(n\nabla F\big) &= (w-n)/\tau \;\;\mbox{in}\;\Omega,\quad\hspace*{0.05em} n\partial_\nu F = 0 \;\;\mbox{on}\;\Gamma, \\
  -\e^2\frac{\Delta\sqrt{n}}{\sqrt{n}} + \log{n} &= \s\Phi + F \;\;\hspace*{1.06em}\mbox{in}\;\Omega,\quad
 \partial_\nu\sqrt{n} = 0 \;\;\mbox{on}\;\Gamma, \\
  -\Delta \Phi &= n \;\;\hspace*{3.8em}\mbox{in}\;\Omega,\quad 
  \hspace*{1.7em}\Phi = 0 \;\;\mbox{on}\;\Gamma,
 \end{align}
\end{subequations}
where $F$, $n$ and $\Phi$ satisfy (\ref{eq::system}a), (\ref{eq::system}b) and (\ref{eq::system}c) respectively.

We mention some of the problems encountered in solving this problem. Observe that, in order to solve (\ref{eq::discrete}a) for $F$, we have to make sure that $n\in L^\infty(\Omega)$ is uniformly bounded below away from zero. Otherwise, we will have to rely on degenerate elliptic estimates, which will require some kind of regularity on both $n$ and $n^{-1}$. 

To circumvent both these problems simultaneously, we make use of a transformation for $n$ initially introduced in \cite{ansgar2}, given by $n=e^y/\|e^y\|_1$. Clearly, if $y\in L^\infty(\Omega)$, then $n$ satisfies the assumptions in Proposition~\ref{prop::LlogL}. Moreover, the normality of $n$ is satisfied trivially. Inserting this into (\ref{eq::discrete}b), leads to a nonlinear elliptic equation with natural gradient growth,
\begin{equation}\label{eq::elliptic}
 -\frac{\e^2}{2}\left(\Delta y + \frac{1}{2}|\nabla y|^2\right) + y = \s\Phi + F + \log{\|e^y\|_1} \;\;\mbox{in}\;\Omega, \quad\partial_\nu y = 0 \;\;\mbox{on}\;\Gamma.
\end{equation}
We will then use (\ref{eq::elliptic}) to construct an auxiliary problem, similar to (\ref{eq::discrete}), and apply the Leray--Schauder theorem on this problem. Under the appropriate regularity, we may then recover solutions of the original problem (\ref{eq::discrete}).

\begin{lemma}\label{lem::discrete}
 Let $d\le 3$ and $w\in \mathcal{P}$. Then there exists at least one weak solution 
 \[
  (F,\sqrt{n},\Phi)\in W_\beta\times V\times V_0,
 \]
 to (\ref{eq::discrete}) with $n=e^y/\|e^y\|_1\in\mathcal{P}$ for some $y\in V$ and $\beta=-\log \|e^y\|_1$.
\end{lemma}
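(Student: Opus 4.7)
Following the hint after equation~(\ref{eq::elliptic}), I would apply the Leray--Schauder fixed-point theorem through the exponential substitution $n = e^y/\|e^y\|_1$. Fix $w\in\mathcal{P}$ and choose a Banach space $X$ compactly embedded in $L^\infty(\Omega)$, e.g.\ $X = W^{1,q}(\Omega)$ with $q > d$. Given $y^\ast\in X$, set $n^\ast := e^{y^\ast}/\|e^{y^\ast}\|_1$, so that $n^\ast\in L^\infty(\Omega)\cap\mathcal{P}$ with $n^\ast \ge \alpha > 0$. I construct $y = T(y^\ast)$ by solving, in order, (i)~$-\Delta\Phi = n^\ast$ with $\Phi|_\Gamma = 0$, yielding $\Phi \in V_0 \hookrightarrow L^\infty(\Omega)$ for $d \le 3$ (Proposition~\ref{prop::H^2}); (ii)~$-\div(n^\ast \nabla F) = (w - n^\ast)/\tau$, yielding $F \in W_\beta$ with $\beta = -\log\|e^{y^\ast}\|_1$ (Proposition~\ref{prop::LlogL}, applicable since $w - n^\ast \in L\log L(\Omega)$ and $n^\ast \ge \alpha$); and (iii)~the semilinear problem~(\ref{eq::elliptic}) with right-hand side $\sigma(\s\Phi + F) + \log\|e^{y^\ast}\|_1$, yielding $y \in H^1(\Omega) \cap L^\infty(\Omega)$ (Proposition~\ref{prop::natural_gradient}, whose $L^p$-hypothesis with $p > d/2$ holds since $\Phi\in L^\infty$ and $F$ embeds via Sobolev into such an $L^p$). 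The Leray--Schauder homotopy parameter $\sigma \in [0,1]$ is inserted so that a fixed point at $\sigma = 1$ corresponds to a solution of~(\ref{eq::discrete}).

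Continuity of $T$ follows from standard stability estimates for each of the three subproblems. Compactness follows because the image of $T$ sits in a space strictly more regular than $X$: once $y \in L^\infty$ is fed back into~(\ref{eq::elliptic}), $\Delta y$ lies in an $L^p$ space, placing $y$ in a subspace of $X$ with compact embedding.

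The hard part is bounding all fixed points $y = \sigma T(y)$ uniformly in $\sigma \in [0,1]$. Testing (\ref{eq::discrete}a) against $F$ produces
\[
 \tau\int_\Omega n|\nabla F|^2\,\d{x} + \int_\Omega (n - w)F\,\d{x} = 0,
\]
and substituting the expression for $F$ from (\ref{eq::discrete}b) gives three integrals. The log-term is bounded below by $\mathcal{E}(n) - \mathcal{E}(w)$ via the convexity inequality $\eta(n) - \eta(w) \le (n - w)\log n$ for $\eta(s) = s(\log s - 1) + 1$; the Bohm term, after integration by parts and (if needed) a regularization of $n$ away from $0$, yields a positive multiple of $\int_\Omega|\nabla^2\sqrt{n}|^2\,\d{x}$ up to controllable lower-order quantities; and the self-interaction contribution $\sigma\s\int_\Omega(n-w)\Phi\,\d{x}$ is absorbed by Young's inequality together with $\Phi \in L^\infty$. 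The net outcome is a discrete entropy dissipation inequality
\[
 \mathcal{E}(n) + c_0 \tau \int_\Omega |\nabla^2\sqrt{n}|^2 \,\d{x} \le \mathcal{E}(w) + c_1 \tau,
\]
uniform in $\sigma$. Since $d\le 3$ and hence $\sqrt{n}\in H^2\hookrightarrow L^\infty$, and since the $L^\infty$-bound on $y$ supplied by Proposition~\ref{prop::natural_gradient} provides a matching lower bound on $\log n$, this delivers the required uniform bound on $y$ in $X$.

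Leray--Schauder then provides a fixed point $y$ of $T$, and the associated triple $(F, n, \Phi)$ satisfies~(\ref{eq::discrete}) in the weak sense. To conclude $\sqrt{n}\in V$ as claimed, rewrite (\ref{eq::discrete}b) as $-\e^2 \Delta\sqrt{n} = \sqrt{n}(F + \s\Phi - \log n)$; the right-hand side lies in $L^2(\Omega)$, so standard Neumann elliptic regularity on the convex bounded domain $\Omega$ yields $\sqrt{n}\in H^2(\Omega)$ with $\partial_\nu\sqrt{n} = 0$ on $\Gamma$.
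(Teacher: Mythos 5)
Your overall architecture (exponential substitution $n=e^y/\|e^y\|_1$, solving the three subproblems via Propositions~\ref{prop::LlogL}, \ref{prop::H^2}, \ref{prop::natural_gradient}, then Leray--Schauder) matches the paper, but the heart of the lemma --- the a priori estimate uniform in the homotopy parameter --- is where your argument breaks down. You test (\ref{eq::discrete}a) with $F$, which is the free-energy estimate, and claim this yields $\mathcal{E}(n)+c_0\tau\int_\Omega|\nabla^2\sqrt{n}|^2\,\d{x}\le\mathcal{E}(w)+c_1\tau$. It cannot. Substituting $F=-\e^2\Delta\sqrt{n}/\sqrt{n}+\log n-\s\Phi$ into $\int_\Omega(n-w)F\,\d{x}\le 0$, the Bohm term contributes $-\e^2\int_\Omega(n-w)\Delta\sqrt{n}/\sqrt{n}\,\d{x}$: the $n$-part is only the first-order quantity $\e^2\int_\Omega|\nabla\sqrt{n}|^2\,\d{x}$, and the cross term $\int_\Omega w\,\Delta\sqrt{n}/\sqrt{n}\,\d{x}$ is not controllable for $w\in\mathcal{P}$ (to handle it by convexity of the Fisher information you would need $\sqrt{w}\in H^1(\Omega)$, which is not part of the hypothesis --- only $\mathcal{E}(w)<\infty$ is). No amount of integration by parts turns this into $\int_\Omega|\nabla^2\sqrt{n}|^2\,\d{x}$. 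The second-order dissipation in the paper comes from testing with $\log n$ (so that the $w$-term is absorbed purely by convexity of $\phi(s)=s(\log s-1)+1$, with no derivatives of $w$), followed by the algebraic identity for $J_1$ and, crucially, Propositions~\ref{prop::n2inequality} and~\ref{prop::dummy}. Your treatment of the interaction term is likewise circular: you absorb $\s\int_\Omega(n-w)\Phi\,\d{x}$ ``by Young's inequality together with $\Phi\in L^\infty$,'' but $\|\Phi\|_\infty\lesssim\|n\|_2$ is exactly one of the quantities that must be controlled; the paper instead bounds this term by $\int_\Omega n^2\,\d{x}$ (using the maximum principle for $\Phi$) and absorbs it into the dissipation via Proposition~\ref{prop::n2inequality}, whose constant depends only on $\|\sqrt{n}\|_2=1$.

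A second genuine gap is the uniform lower bound on $y$. You invoke the $L^\infty$ bound of Proposition~\ref{prop::natural_gradient}, but that bound depends on the $L^p$-norm of the data $\s\Phi+F+\log\|e^{y}\|_1$, which at a fixed point depends on $y$ itself; in particular the estimate for $F$ from Proposition~\ref{prop::LlogL} scales with the lower bound $\alpha$ of the coefficient $n$, which is precisely what you are trying to bound --- the argument is circular. The paper closes this loop differently: once the entropy estimate gives a uniform bound on $\|\sqrt{n}\|_{H^2}$ (hence the upper bound $M_u$), it applies Harnack's inequality to $-\e^2\Delta\sqrt{n}+\mu\sqrt{n}=0$ with $\mu=\log\sqrt{n}-\s\Phi-F\in L^p$, $p>d/2$, together with $\|\sqrt{n}\|_2=1$, to rule out vanishing of $\sqrt{n}$ and obtain a uniform positive lower bound, whence $y>-M_l$. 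Finally, note the paper inserts the homotopy parameter into the data of the auxiliary system (\ref{eq::auxiliary}) and verifies $\mbox{H}(v,0)=0$ by a separate test-function argument, rather than using the $y=\sigma T(y)$ form; with your form the entropy computation would have to be redone for the rescaled equation, which is another point your sketch leaves unaddressed.
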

\begin{proof}
 Let $w\in \mathcal{P}$. For arbitrarily given $v\in \mathcal{C}(\overline{\Omega})$ and $\lambda\in[0,1]$, we consider the auxiliary problem to find $(F,y,\Phi)\in W_\beta\times V\times V_0$, with $\beta=-\lambda\log\|e^v\|_1$:
 \begin{subequations}\label{eq::auxiliary}
 \begin{align}
  -\div\big((e^v/\|e^v\|_1)\nabla F\big) &= \lambda\big(w-(e^v/\|e^v\|_1)\big)/\tau\;\;\mbox{in}\;\Omega,\quad
  \partial_\nu F = 0 \;\;\mbox{on}\;\Gamma, \\
  -\frac{\e^2}{2}\left(\Delta y + \frac{1}{2}|\nabla y|^2\right) + y &= \s\Phi + F + \lambda\log{\|e^v\|_1} \;\;\mbox{in}\;\Omega, \quad\hspace*{0.25em}\partial_\nu y = 0 \;\;\mbox{on}\;\Gamma, \\
  -\Delta \Phi &= \lambda \big(e^{v}/\|e^v\|_1\big) \;\;\hspace*{3.8em}\mbox{in}\;\Omega,\quad 
  \hspace*{1.15em}\Phi = 0 \;\;\mbox{on}\;\Gamma,
 \end{align}
 \end{subequations}
As mentioned above, we used the variable transformation $n=e^y/\|e^y\|_1$. Note also that for the case $\lambda=1$, system (\ref{eq::auxiliary}) is equivalent to the initial system (\ref{eq::system}).

We begin by showing that this system yields a weak solution $y\in V$ for any $v\in \mathcal{C}(\overline{\Omega})$ and $\lambda\in[0,1]$. Indeed, since $e^v/\|e^v\|_1\in \mathcal{C}(\overline{\Omega})$ and is uniformly bounded from below by a positive constant, we obtain a unique solution $F\in W_\beta$ for (\ref{eq::auxiliary}a) due to Proposition~\ref{prop::LlogL}. From Proposition~\ref{prop::H^2}, we obtain a unique solution $\Phi\in V_0$ for (\ref{eq::auxiliary}c). Now, by inserting $(F,\Phi)$ into (\ref{eq::auxiliary}b), we obtain a solution $y\in H^1(\Omega)\cap L^\infty(\Omega)$, due to Proposition~\ref{prop::natural_gradient}. Note also that $n=e^y/\|e^y\|_1\in L^\infty(\Omega)$ with $n\ge e^{\inf y}/\|e^y\|_1=:\underline{n}> 0$ a.e.~in $\Omega$ and $\int_\Omega n\,\d{x} = 1$. Moreover, by rewriting (\ref{eq::auxiliary}b) in terms of $\rho:=\sqrt{n}$, we have 
\[
 -\e^2\Delta\rho = \rho\left(\s\Phi + F - \log\rho\right)\in L^2(\Omega),
\]
which implies $\Delta\rho\in L^2(\Omega)$, since $\rho\in L^\infty(\Omega)$. Due to the convexity of $\Omega$ and the homogeneous Neumann boundary condition for $\rho$, we have the second order inequality $\|\nabla^2 \rho\|_2 \le \|\Delta \rho\|_2$, where $\nabla^2\rho$ denotes the Hessian of $\rho$ \cite{grisvard}. Therefore, $\rho\in V$ and consequently $y\in V$, since
\begin{equation}\label{lem::discrete::eq::regularity}
 \partial_{ij} y = 2\partial_i\Big(\rho^{-1}\partial_j\rho\Big) = 2\big(\rho\,\partial_{ij}\rho - \partial_i\rho\,\partial_j\rho\,\big)/\rho^2 \in L^2(\Omega),
\end{equation}
Due to the boundary condition for $y$, we further have $\partial_\nu \rho =0$ a.e.~on $\Gamma$, i.e., $\rho\in V$.

Now consider the operator $\mbox{H}\colon \mathcal{C}(\overline{\Omega})\times[0,1]\to \mathcal{C}(\overline{\Omega});\; (v,\lambda)\mapsto y$. This operator is continuous and compact due to the continuity of the solution operators corresponding to (\ref{eq::auxiliary}a)--(\ref{eq::auxiliary}c) respectively and the compact embedding $H^2(\Omega)\hookrightarrow\mathcal{C}(\overline{\Omega})$. It is also easy to see that $\mbox{H}(v,0)=0$ for all $v\in \mathcal{C}(\overline{\Omega})$. We see this by simply testing the variational formulation of (\ref{eq::auxiliary}b) with $\test=\mbox{sign}(y)(e^{|y|}-1)$, which yields
\[
 \e^2\int_\Omega |\nabla e^{|y|/2}|^2\,\d{x} + \frac{\e^2}{4}\int_\Omega |\nabla y|^2\,\d{x} + \int_\Omega |y|^2\,\d{x} \le 0,
\]
where we used the fact that $s(e^s-1) \ge s^2$ for $s\ge 0$. Therefore, $y=0$ a.e.~in $\Omega$ and consequently $y=0$ in $\overline{\Omega}$, since $y$ is continuous.

Let $(y,\lambda)\in \mathcal{C}(\overline{\Omega})\times[0,1]$ such that $\mbox{H}(y,\lambda)=y$. We now show that $y$ is uniformly bounded in $\mathcal{C}(\overline{\Omega})$ w.r.t.~$\lambda$ by some constant $M>0$. Observe that for $\sqrt{n}\in V$, the auxiliary system (\ref{eq::auxiliary}) is equivalent to the equations
\begin{subequations}\label{lem::discrete::eq::uniform}
\begin{align}
 \frac{\lambda}{\tau}(n-w) &= \div\left(n\nabla\left(-\e^2\frac{\Delta\sqrt{n}}{\sqrt{n}} + \log{n} - \s\Phi\right)\right) \\
 -\Delta\Phi &= \lambda n,
\end{align}
\end{subequations}
where we also used the fact that $\log{\|e^y\|_1}$ is constant. Since $\phi(s) = s(\log{s}-1) + 1$ is convex, $\phi(s_1)-\phi(s_2)\le \phi'(s_1)(s_1-s_2)$ for all $s_1,s_2\ge 0$. Therefore,
\begin{align*}
 \frac{\lambda}{\tau}(\mathcal{E}(n)-\mathcal{E}(w)) &= \frac{\lambda}{\tau}\int_\Omega (\phi(n)-\phi(w))\,\d{x} \le \frac{\lambda}{\tau}\int_\Omega (n-w)\log{n}\,\d{x} \\
 &\hspace*{-6em}= -\left\langle n\nabla\left(-\e^2\frac{\Delta\sqrt{n}}{\sqrt{n}} + \log{n}-\s\Phi\right), \nabla\log{n} \right\rangle \\
 &\hspace*{-6em}= \e^2\left\langle n\nabla\left(\frac{\Delta\sqrt{n}}{\sqrt{n}}\right),\nabla\log{n}\right\rangle - \int_\Omega n|\nabla\log{n}|^2\,\d{x} + \s\int_\Omega \nabla\Phi\.\nabla n\,\d{x} \\
 &\hspace*{-6em}= \e^2 J_1 - J_2 + \s J_3.
\end{align*}
For the following computations we set $\rho=\sqrt{n}$. For $J_1$, we have the following
\begin{align*}
 J_1 &= 2 \left\langle \nabla\big(\rho\,\Delta\rho\,\big) - 2\Delta\rho\,\nabla\rho, \frac{\nabla\rho}{\rho}\right\rangle \\
 &= -2 \int_\Omega |\Delta\rho|^2 + \Delta\rho\frac{|\nabla\rho|^2}{\rho}\,\d{x}
 \le -2\int_\Omega \rho^2\left[\left|\frac{\nabla^2\rho}{\rho}\right|^2 + \frac{\Delta\rho}{\rho}\left|\frac{\nabla\rho}{\rho}\right|^2\right]\d{x},
\end{align*}
where, in the last inequality, we used the fact that $\|\nabla^2\rho\|_2\le \|\Delta\rho\|_2$, due to the results in \cite{grisvard}.
As for $J_2$, we have the \emph{Fisher information} 
\[
 J_2 = \int_\Omega n|\nabla\log{n}|^2\,\d{x} = 4\int_\Omega \rho^2\left|\frac{\nabla\rho}{\rho}\right|^2\d{x}.
\]
Since $\Phi$ satisfies (\ref{lem::discrete::eq::uniform}b) and the homogeneous boundary condition $\Phi=0$ on $\Gamma$, we have $\partial_\nu\Phi\le 0$ simply by the weak maximum principle. Therefore,
\[
 J_3 = -\int_\Omega (\Delta\Phi) n\,\d{x} + \int_{\Gamma}n\,\partial_\nu\Phi\,\d{s} = \lambda\int_\Omega n^2\,\d{x} + \int_{\Gamma}n\,\partial_\nu\Phi\,\d{s} \le \int_\Omega n^2\,\d{x}
\]
 for any $\lambda\in[0,1]$. Altogether, we have
 \[
  \frac{\lambda}{\tau}(\mathcal{E}(n)-\mathcal{E}(w)) \le -2\int_\Omega \rho^2\left[\e^2\left|\frac{\nabla^2\rho}{\rho}\right|^2 + \e^2\frac{\Delta\rho}{\rho}\left|\frac{\nabla\rho}{\rho}\right|^2 + 2\left|\frac{\nabla\rho}{\rho}\right|^2 - \frac{\s}{2}n\right]\d{x}.
 \]
 To bring the last term on the right-hand side into the same form as the rest of the terms, we apply the following result proved in Appendix~\ref{appendix::n2inequality}.
 \begin{proposition}\label{prop::n2inequality}
  Let $u\in H^2(\Omega)$ with $k\le u\le k^{-1}$ for some $k\in(0,1)$. Then for any $\delta_i>0$, $i=1,2$, there exists a constant $c>0$ such that
 \[
  \int_\Omega u^4\,\d{x} \le \delta_1\int_\Omega u^2\left|\nabla \log u\right|^4\,\d{x} + \delta_2\int_\Omega \left|\nabla u\right|^2\,\d{x} + c(\delta_1,\delta_2,\|u\|_2),
 \]
 where the constant $c$ depends also on the constants from Proposition~\ref{prop::gagliardo}.
 \end{proposition}
 
 Since $\rho\in V\cap L^\infty(\Omega)$ is essentially bounded from bellow and $\|\rho\|_2=1$, we have
 \[
  \int_\Omega n^2\,\d{x} \le \delta_1\int_\Omega \rho^2\left|\frac{\nabla\rho}{\rho}\right|^4\d{x} + \delta_2\int_\Omega \rho^2\left|\frac{\nabla\rho}{\rho}\right|^2\d{x} + c(\delta_1,\delta_2),
 \]
 for any $\delta_i>0$, $i=1,2$. By choosing $\delta_2=4/|\s|$ and rescaling $\delta_1 =2\e^2\delta/|\s|$ for some $\delta>0$, we further obtain
 \begin{equation}\label{lem::discrete::eq::first}
  \quad\frac{\lambda}{\tau}(\mathcal{E}(n)-\mathcal{E}(w)) + 2\e^2\int_\Omega \rho^2\left[\left|\frac{\nabla^2\rho}{\rho}\right|^2 + \frac{\Delta\rho}{\rho}\left|\frac{\nabla\rho}{\rho}\right|^2 - \delta\left|\frac{\nabla\rho}{\rho}\right|^4\right]\d{x} \le c(\delta).
 \end{equation}
 As a matter of fact, the second term on the left can be bounded from below by a multiple of $\|\nabla^2\rho\|_2^2$. More precisely, we have the following result proved in Appendix~\ref{appendix::dummy}.
\begin{proposition}\label{prop::dummy}
  Suppose $u\in V$ and $k\le u\le k^{-1}$ for some $k\in(0,1)$. Then for sufficiently small $\delta>0$, there exists a constant $\gamma>0$ such that
  \[
   \int_\Omega u^2\left[\left|\frac{\nabla^2 u}{u}\right|^2 + \frac{\Delta u}{u}\left|\frac{\nabla u}{u}\right|^2 - \delta\left|\frac{\nabla u}{u}\right|^4\right]\d{x} \ge \gamma \int_\Omega |\nabla^2 u|^2\,\d{x},
  \]
  where $\gamma=(1 + (d-1)c_0)/d$ with $c_0\in (0,3/(d+2))$.
\end{proposition}

Using this result on inequality (\ref{lem::discrete::eq::first}) gives us
 \begin{equation}\label{lem::discrete::eq::bound}
  \frac{\lambda}{\tau}(\mathcal{E}(n)-\mathcal{E}(w)) + 2\gamma\e^2\int_\Omega |\nabla^2\rho|^2\,\d{x} \le c(\delta)\quad\mbox{for any}\;\;\lambda\in[0,1],
 \end{equation}
 which leads to the uniform bound
 \begin{equation}\label{lem::discrete::eq::estimates}
  \|\nabla^2\rho\|_2^2 \leq \frac{1}{2\gamma\e^2\tau}\left(\mathcal{E}(w)+c(\delta)\tau\right),
 \end{equation}
 thus implying the boundedness of $\nabla^2\rho$ in $L^2(\Omega)$ independent of $\lambda\in[0,1]$, and therefore the uniform boundedness of $\rho\in V$. As a consequence, we obtain an upper bound for $y$, i.e. there exists some constant $M_u>0$, such that $y< M_u$ a.e.~in $\Omega$.
 
 To show that $y$ is uniformly bounded from below, we have to show a uniform lower bound for $\rho$ away from zero. This is a result of the Harnack inequality \cite{serrin, trudinger}. Firstly, note that $\log\rho -\s\Phi -F\in L^p(\Omega)$, $p>d/2$. Then by Harnack's inequality, a weak solution $\rho\in H^1_{loc}(\Omega)$ with $0\le \rho\le M_u$ in $Q$ of the equation
 \[
  -\e^2\Delta\rho + \mu\rho = 0\;\;\mbox{in}\;\Omega,
 \]
 with $\mu=\mu(\lambda)\in L^p(\Omega)$, $p>d/2$, in a cube $Q=Q(3r)\subset\Omega$ satisfies
 \[
  \max_{Q(r)}\rho(x) \le c_h\min_{Q(r)} \rho(x),
 \]
 for some constant $c_h=c_h(\lambda)>0$ independent of $\rho$. 
 
 Now it is an easy exercise to verify that having $\rho(x)=0$ for some $x\in \Omega$ would lead to $\rho\equiv 0$ in $\Omega$, which clearly contradicts $\|\rho\|_2=1$. Therefore, $\rho\ge \underline{\rho}>0$ a.e.~in $\Omega$ uniformly in $\lambda$ and there is some constant $M_l>0$ such that $y> -M_l$ a.e.~in $\Omega$. Choosing $M=\max\{M_l,M_u\}$ gives the estimate $\|y\|_\infty < M$. Furthermore, we see from (\ref{lem::discrete::eq::regularity}) that $\nabla^2 y\in L^2(\Omega)$ is uniformly bounded. This implies that $y$ is uniformly bounded in $H^2(\Omega)$, and consequently in $\mathcal{C}(\overline{\Omega})$ with the same constant $M$, due to the Sobolev embedding $H^2(\Omega)\hookrightarrow \mathcal{C}(\overline{\Omega})$. We finally conclude the proof by applying the Leray--Schauder fixed point theorem \cite{gilbarg}.
\end{proof}

The value $\beta\in\rr$ may be thought of as a Lagrange multiplier for the constraint on the density $\int_\Omega n\,\d{x}=1$. Indeed, the solution $\sqrt{n}\in \mathcal{P}$ of (\ref{eq::discrete}b) may be characterized as the unique minimizer of the functional
\[
 \mathcal{F}(n):= \e^2\int_\Omega |\nabla\sqrt{n}|^2\,\d{x} + \mathcal{E}(n) - \int_\Omega n\left(\s\Phi + F\right)\,\d{x},
\]
on the space $\{n\in\mathcal{P}\;|\; \sqrt{n}\in V\}$ for given $(F,\Phi)$.

\section{Uniform estimates in $\tau$}\label{sec::uniform}

Let $T>0$ be a fixed arbitrary terminal time. For every $\tau>0$ we define the step function $n^{(\tau)}\colon[0,T)\to L^1(\Omega)$ recursively as follows. Let $n(0)=n_0$ and for given $k\in\nn$, let $(F_k,\sqrt{n_k},\Phi_k)\in W_\beta\times V\times V_0$ be a solution of (\ref{eq::discrete}) with $w=n_{k-1}$ and $n_k$ positive. Now define 
\[ 
 F^{(\tau)}(t):=F_k,\;\; n^{(\tau)}(t):=n_k,\;\; \Phi^{(\tau)}(t):=\Phi_k\quad\mbox{for}\quad (k-1)\tau< t\le k\tau.
\]
Then $(F^{(\tau)},n^{(\tau)},\Phi^{(\tau)})$ satisfies
\begin{subequations}\label{eq::evolution}
 \begin{align}
  (n^{(\tau)} - \zeta_\tau n^{(\tau)})/\tau &= \div\big(n^{(\tau)}\nabla F^{(\tau)}\big)\;\;\hspace*{6.6em}\mbox{in}\;\Omega,\\
  F^{(\tau)} &= -\e^2\frac{\Delta\sqrt{n^{(\tau)}}}{\sqrt{n^{(\tau)}}} + \log{n^{(\tau)}} - \s\Phi^{(\tau)} \;\;\mbox{in}\;\Omega, \\
  -\Delta \Phi^{(\tau)} &= n^{(\tau)} \;\;\hspace*{11.6em}\mbox{in}\;\Omega,
 \end{align}
\end{subequations}
together with their respective boundary conditions. Here, $\zeta_\tau$ denotes the shift operator $(\zeta_\tau n^{(\tau)})(t)=n^{(\tau)}(t-\tau)$ for $t\in[\tau,T)$. As a consequence of Lemma~\ref{lem::discrete}, we obtain the following uniform estimate for the sequence of step functions $\{n^{(\tau)}\}$.

\begin{lemma}\label{lem::uniform}
 There exists a $\delta_0>0$ such that for any $\delta\in[0,\delta_0]$, the sequence of step functions $\{n^{(\tau)}\}$ satisfies
 \[
  \tau^{-1}\|n^{(\tau)} - \zeta_\tau n^{(\tau)}\|_{L^{1+\delta}(\tau,T;V^*)} + \|n^{(\tau)}\|_{L^{1+\delta}(0,T;H^2(\Omega))} \le c
 \]
 for some constant $c>0$ independent of $\tau$.
\end{lemma}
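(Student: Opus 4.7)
The plan is to derive both bounds from the per-step estimate (\ref{lem::discrete::eq::bound}): first to extract a uniform $L^2(0,T;H^2)$ bound on $\sqrt{n^{(\tau)}}$, and then to transfer this, via Gagliardo--Nirenberg interpolation and the weak form of (\ref{eq::evolution}a), to the two quantities appearing on the left-hand side. Applying (\ref{lem::discrete::eq::bound}) at the fixed point $\lambda=1$ with $n=n_k$ and $w=n_{k-1}$, and summing over $k=1,\dots,N$ with $N\tau \le T < (N+1)\tau$, the entropy differences telescope and nonnegativity of $\mathcal{E}$ gives
\[
 2\gamma\e^2\,\tau\sum_{k=1}^N \|\nabla^2\sqrt{n_k}\|_2^2 \le \mathcal{E}(n_0) + c(\delta)\,T.
\]
Combined with the mass normalisation $\|\sqrt{n^{(\tau)}}(t)\|_2 = 1$, this yields uniform boundedness of $\sqrt{n^{(\tau)}}$ in $L^2(0,T;H^2(\Omega)) \cap L^\infty(0,T;L^2(\Omega))$.

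Next I pass from $\sqrt{n}$ to $n$ via $\partial_{ij}n = 2(\sqrt{n}\,\partial_{ij}\sqrt{n} + \partial_i\sqrt{n}\,\partial_j\sqrt{n})$, yielding the pointwise bound $|\nabla^2 n| \le 2\sqrt{n}\,|\nabla^2\sqrt{n}| + 2|\nabla\sqrt{n}|^2$. Proposition~\ref{prop::gagliardo} with $\|\sqrt{n}\|_2=1$ then produces, for $d\le 3$, the interpolations $\|\sqrt{n}\|_\infty \le c\|\sqrt{n}\|_{H^2}^{d/4}$ and $\|\nabla\sqrt{n}\|_4 \le c\|\sqrt{n}\|_{H^2}^{1/2 + d/8}$, whence
\[
 \|n\|_{H^2} \le c\,\big(1 + \|\sqrt{n}\|_{H^2}^{1+d/4}\big).
\]
Raising to the power $1+\delta$ and integrating in time, the $L^2(H^2)$ bound on $\sqrt{n^{(\tau)}}$ then places $n^{(\tau)}$ in $L^{1+\delta}(0,T;H^2(\Omega))$ whenever $(1+\delta)(1+d/4) \le 2$.

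For the discrete time derivative I test (\ref{eq::evolution}a) against $\test\in V$ and integrate by parts exactly as in the weak form of Theorem~\ref{thm::existence} (using the boundary conditions $n\partial_\nu F=\partial_\nu\test=0$) to obtain
\[
 \langle \tau^{-1}(n^{(\tau)}-\zeta_\tau n^{(\tau)}),\test\rangle_{V^*,V} = \int_\Omega F^{(\tau)}\sqrt{n^{(\tau)}}\Big(\sqrt{n^{(\tau)}}\,\Delta\test + 2\nabla\sqrt{n^{(\tau)}}\.\nabla\test\Big)\,\d{x}.
\]
Multiplying (\ref{eq::evolution}b) by $\sqrt{n^{(\tau)}}$ gives $F^{(\tau)}\sqrt{n^{(\tau)}} = -\e^2\Delta\sqrt{n^{(\tau)}} + \sqrt{n^{(\tau)}}\log n^{(\tau)} - \s\sqrt{n^{(\tau)}}\Phi^{(\tau)}$, each of whose $L^2(\Omega)$-norms is controlled by a subquadratic power of $\|\sqrt{n^{(\tau)}}\|_{H^2}$: directly for the leading Bohm term, via the elementary bound $s(\log s)^2 \le c(1 + s^{1+\eta})$ combined with the interpolation for $\|\sqrt{n}\|_\infty$ for the entropy term, and via Proposition~\ref{prop::H^2} together with the embedding $H^2\hookrightarrow L^\infty$ for the potential term. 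Bounding the bracketed factor multiplying $\test$ by the Sobolev embeddings $H^2\hookrightarrow L^\infty\cap W^{1,4}$ (valid for $d\le 3$) then yields an estimate of the form $\|\tau^{-1}(n^{(\tau)}-\zeta_\tau n^{(\tau)})\|_{V^*} \le c(1 + \|\sqrt{n^{(\tau)}}\|_{H^2}^p)$ for some $p<2$, which is $L^{1+\delta}$ in time for $\delta$ sufficiently small. The main obstacle is the exponent bookkeeping in the last two steps: one must carefully track the Gagliardo--Nirenberg powers so that the final exponent of $\|\sqrt{n^{(\tau)}}\|_{H^2}$ remains strictly below $2$, and then take $\delta_0$ as the minimum of the two admissible ranges to obtain both estimates simultaneously.
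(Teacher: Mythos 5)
Your proposal is correct and follows essentially the same route as the paper: the telescoped entropy estimate (\ref{lem::discrete::eq::bound}) at $\lambda=1$ gives the uniform $L^2(0,T;H^2)$ bound on $\sqrt{n^{(\tau)}}$ together with $\|\sqrt{n^{(\tau)}}\|_{L^\infty(0,T;L^2)}=1$, and then Gagliardo--Nirenberg interpolation, the elliptic estimate of Proposition~\ref{prop::H^2} for $\Phi^{(\tau)}$, and the identity $\nabla^2 n = 2(\rho\nabla^2\rho + \nabla\rho\otimes\nabla\rho)$ control both the weak form of the discrete time increments in $V^*$ and the $H^2$-norm of $n^{(\tau)}$. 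The only difference is bookkeeping: the paper splits $\|F\nabla n\|_{4/3}$ and $\|Fn\|_2$ by Young's inequality into $\|F\rho\|_2^2$ plus powers $\|\rho\|_\infty^{2\eta(\delta)}$, $\|\nabla\rho\|_4^{2\eta(\delta)}$, whereas you bound $\|F\rho\|_2$ directly and track a total subquadratic exponent, which is equally valid and yields a comparable $\delta_0>0$.
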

\begin{proof}
 For notational convenience, we set as before $\rho^{(\tau)}=\sqrt{n^{(\tau)}}$ and use $c>0$ as a generic constant independent of $\tau$. We begin by establishing a uniform bound in $\tau$ for the translations. To do so, we multiply (\ref{eq::evolution}a) with an arbitrary function $\varphi\in V$, integrate over $\Omega$ and integrate by parts to obtain
 \begin{align}\label{lem::uniform::eq::weak}
  \frac{1}{\tau}\int_\Omega \big(n^{(\tau)} - \zeta_\tau n^{(\tau)}\big)\varphi\,\d{x} &= \int_\Omega F^{(\tau)}\big(\nabla n^{(\tau)}\.\nabla\varphi + n^{(\tau)}\Delta\varphi\big)\,\d{x} \\
  &\le c\left(\|F^{(\tau)}\nabla n^{(\tau)}\|_\frac{4}{3} + \|F^{(\tau)} n^{(\tau)}\|_2\right)\|\varphi\|_V,\nonumber
 \end{align}
 where we used the embedding $H^2(\Omega)\hookrightarrow W^{1,4}(\Omega)$. This implies the estimate
 \[
  \frac{1}{\tau}\|n^{(\tau)} - \zeta_\tau n^{(\tau)}\|_{L^{1+\delta}(\tau,T;V^*)}^{1+\delta} \le c_1\int_0^T \|F^{(\tau)}\nabla n^{(\tau)}\|_\frac{4}{3}^{1+\delta} + \|F^{(\tau)}(t) n^{(\tau)}(t)\|_2^{1+\delta}\,\d{t}.
 \]
 Since there are constants $c_i>0$, $i=1,\ldots,4$, such that
 \[
  \|F\nabla n\|_\frac{4}{3}^{1+\delta}  \le c_1\|F\rho\|_2^2 + c_2\|\nabla\rho\|_4^{2\eta(\delta)}\quad\mbox{and}\quad \|F n\|_2 \le c_3\|F\rho\|_2^2 + c_4\|\rho\|_\infty^{2\eta(\delta)},
 \]
 with $\eta(\delta)=(1+\delta)/(1-\delta)>1$, we further obtain
 \begin{equation}\label{lem::uniform::eq::shift}
  \frac{1}{\tau}\|n^{(\tau)} - \zeta_\tau n^{(\tau)}\|_{L^{1+\delta}(\tau,T;V^*)}^{1+\delta} \le c\int_0^T \!\!\|\rho^{(\tau)}\|_\infty^{2\eta(\delta)} + \|\nabla\rho^{(\tau)}\|_4^{2\eta(\delta)} + \|F^{(\tau)}\rho^{(\tau)}\|_2^2\,\d{t}.
 \end{equation}
 Therefore, the required estimate follows from the uniform boundedness of the right hand side of the above equation in $\tau$.

 We now make several observations from the results obtained in Section~\ref{sec::discrete}. As a direct consequence of the conservation of mass, we obtain the uniform bound
 \[
  \|\rho^{(\tau)}\|_{L^\infty(0,T;L^2(\Omega))} = \|n^{(\tau)}\|_{L^\infty(0,T; L^1(\Omega))} = \int_\Omega n_0\,\d{x} = 1.
 \]
 Furthermore, we deduce from (\ref{lem::discrete::eq::bound}) the uniform bound
 \begin{equation}\label{lem::uniform::eq::0}
  \|\nabla^2\rho^{(\tau)}\|_{L^2(0,T;L^2(\Omega))} \le c_0,
 \end{equation}
 with a constant $c_0>0$ depending only on $d$, $\Omega$, $T$, and $n_0$, and consequently, the uniform boundedness of the sequence $\{\rho^{(\tau)}\}\subset L^2(0,T;V)$. In order to establish uniform boundedness of the first two terms in (\ref{lem::uniform::eq::shift}), we make use of Proposition~\ref{prop::gagliardo}. For the first term, we have
 \begin{equation}\label{lem::uniform::eq::1}
  \int_0^T \|\rho\|_\infty^{2\eta(\delta)}\,\d{t} \le c\,\|\rho\|_{L^\infty(0,T;L^2(\Omega))}^{2(1-\theta)\eta(\delta)}\int_0^T\|\rho\|_{2,2}^{2\theta\eta(\delta)}\,\d{t},
 \end{equation}
 with $\theta=d/4\in(0,1)$ for $d\le 3$, which leads to uniform boundedness if we choose $\eta(\delta)\le 4/d$. This is equivalent to the requirement $\delta\le (4-d)/(4+d)=:\delta_1$. Similarly, we have for the second term
 \begin{equation}\label{lem::uniform::eq::2}
  \int_0^T \|\nabla\rho\|_4^{2\eta(\delta)}\,\d{t} \le c\,\|\rho\|_{L^\infty(0,T;L^2(\Omega))}^{2(1-\theta)\eta(\delta)}\int_0^T\|\rho\|_{2,2}^{2\theta\eta(\delta)}\,\d{t}, 
 \end{equation}
 with $\theta = (4+d)/8\in(0,1)$ for $d\le 3$. Here, we may choose $\eta(\delta)\le 8/(4+d)$, which is equivalent to choosing $\delta\le (4-d)/(12+d)=:\delta_2$. Therefore, we may choose $\delta_0=\min\{\delta_1,\delta_2\}$. The uniform boundedness of the last term in (\ref{lem::uniform::eq::shift}) may be seen as follows. From (\ref{eq::evolution}b), we obtain the following estimate
 \[
  \|F\rho\|_2 \le \e^2\|\nabla^2\rho\|_2 + 2\|\rho\log \rho\|_2 + |\s|\|\rho\Phi\|_2 
  \le \e^2\|\rho\|_{2,2} + \big(2 + c\,|\s|\big)\|\rho\|_4^2,
 \]
 where we used the fact that $s\log(s)\le s^2$ for $s>0$, the a priori estimate provided by Proposition~\ref{prop::H^2}, and the Sobolev embedding $H^2(\Omega)\hookrightarrow L^\infty(\Omega)$. Now, from Proposition~\ref{prop::gagliardo}, we deduce the existence of constants $c_5,c_6>0$, such that
 \begin{equation}\label{lem::uniform::eq::3}
  \int_0^T \|F\rho\|_2^{2}\,\d{t} \le c_5\int_0^T \|\rho\|_{2,2}^2\,\d{t} + c_6\|\rho\|_{L^\infty(0,T;L^2(\Omega))}^{4(1-\theta)}\int_0^T \|\rho\|_{2,2}^{4\theta}\,\d{t},
 \end{equation}
 with $\theta=d/8\in(0,1)$, which shows the uniform bound for $\{F^{(\tau)}\rho^{(\tau)}\}$. Therefore, by collecting the estimates obtained above, we conclude the first assertion.
 
 To show that $\{n^{(\tau)}\}$ is uniformly bounded in $L^{1+\delta}(0,T;H^2(\Omega))$, we simply use the fact that $\nabla^2 n = 2(|\nabla\rho|^2 + \rho\nabla^2\rho)$ and Proposition~\ref{prop::gagliardo} again, to obtain
 \[
  \int_0^T\|\nabla^2 n\|_2^{1+\delta}\,\d{t}  \le c\int_0^T \|\nabla\rho\|_4^{2(1+\delta)} + \|\rho\|_\infty^{2\eta(\delta)} + \|\nabla^2\rho\|_2^2 \,\d{t}.
 \]
 Since $(1+\delta)\le \eta(\delta)$, we have the uniform boundedness of the right hand side due to the estimates (\ref{lem::uniform::eq::0}), (\ref{lem::uniform::eq::1}) and (\ref{lem::uniform::eq::2}). Similarly, we can show the uniform bound for $\{n^{(\tau)}\}$ in $L^{1+\delta}(0,T;L^2(\Omega))$, which leads to the estimate asserted for $\{n^{(\tau)}\}$.
\end{proof}

\section{Passing to the limit $\tau\to 0$}\label{sec::limit}

We recall a nonlinear version of Aubin's lemma found in \cite{dreher}.

\begin{proposition}\label{prop::dreher}
Let $X,B,Y$ be Banach spaces such that the embedding $X\hookrightarrow B$ is compact and the embedding $B\hookrightarrow Y$ is continuous. Furthemore, let $1\le p<\infty$, $r=1$, and let $(u_\tau)$ be a sequence of functions, which are constant on each subinterval $(t_{k-1},t_k)$, satisfying
\[
 \tau^{-1}\|u_\tau - \zeta_\tau n^{(\tau)}\|_{L^r(\tau,T;Y)} + \|u_\tau\|_{L^p(0,T;X)}\le c_0\quad\mbox{for all}\;\;\tau>0,
\]
where $c_0>0$ is independent of $\tau$. Then $(u_\tau)$ is relatively compact in $L^p(0,T;B)$.
\end{proposition}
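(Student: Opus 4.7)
The plan is to reduce Proposition~\ref{prop::dreher} to the classical continuous Aubin--Lions compactness lemma by replacing each piecewise constant $u_\tau$ by its piecewise linear time-interpolant $\tilde u_\tau$, showing the latter is relatively compact in $L^p(0,T;B)$, and transferring this back to $u_\tau$ via Ehrling's lemma.

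Concretely, writing $u_k := u_\tau|_{(t_{k-1},t_k)}$, I would define
\[
 \tilde u_\tau(t) := u_{k-1} + \frac{t-t_{k-1}}{\tau}(u_k - u_{k-1}),\quad t\in(t_{k-1},t_k),
\]
with $\tilde u_\tau \equiv u_1$ on $(0,t_1)$, say. A pointwise triangle inequality yields $\|\tilde u_\tau\|_{L^p(0,T;X)} \le 2\|u_\tau\|_{L^p(0,T;X)} \le 2c_0$, and since $\tilde u_\tau' \equiv (u_k-u_{k-1})/\tau$ on each subinterval, the hypothesis rewrites as
\[
 \|\tilde u_\tau'\|_{L^1(0,T;Y)} = \sum_k \|u_k - u_{k-1}\|_Y = \tau^{-1}\|u_\tau - \zeta_\tau u_\tau\|_{L^1(\tau,T;Y)} \le c_0.
\]
Thus $(\tilde u_\tau)$ is bounded in $L^p(0,T;X)\cap W^{1,1}(0,T;Y)$, and the classical Aubin--Lions lemma (applicable since $X\hookrightarrow B$ compactly, $B\hookrightarrow Y$ continuously, and $p<\infty$, $r=1$) gives its relative compactness in $L^p(0,T;B)$.

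It remains to show $\|u_\tau - \tilde u_\tau\|_{L^p(0,T;B)}\to 0$ as $\tau\to 0$. On $(t_{k-1},t_k)$ the difference equals $(u_k-u_{k-1})(t_k-t)/\tau$, so a direct computation gives
\[
 \|u_\tau - \tilde u_\tau\|_{L^p(0,T;Y)}^p = \frac{\tau}{p+1}\sum_k \|u_k-u_{k-1}\|_Y^p \le \frac{\tau}{p+1}\,c_0^p,
\]
where the last step uses $\sum a_k^p \le (\sum a_k)^p$ for $p\ge 1$ and $a_k\ge 0$. The analogous $L^p(0,T;X)$-norm of $u_\tau - \tilde u_\tau$ is merely uniformly bounded by a multiple of $c_0$. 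Invoking Ehrling's lemma, granted by the compactness of $X\hookrightarrow B$ together with $B\hookrightarrow Y$, in the form $\|v\|_B \le \eta\|v\|_X + C_\eta\|v\|_Y$, and integrating in time, I obtain
\[
 \|u_\tau - \tilde u_\tau\|_{L^p(0,T;B)} \le C\eta + C_\eta\,\tau^{1/p},
\]
which can be made arbitrarily small by first fixing $\eta$ and then letting $\tau\to 0$.

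Combining the two steps, any sequence $(u_{\tau_n})$ with $\tau_n\to 0$ lies at vanishing $L^p(0,T;B)$-distance from the relatively compact sequence $(\tilde u_{\tau_n})$, and is therefore itself relatively compact. The delicate step I expect to be the main obstacle is the $L^p(0,T;Y)$-estimate on $u_\tau-\tilde u_\tau$: the hypothesis only provides $L^1$-type control of the discrete time increments, yet this upgrades to an $L^p$-type smallness with the crucial gain $\tau^{1/p}$ precisely because on each subinterval the difference is a scalar-valued affine function of $t$ multiplying the vector $u_k-u_{k-1}$, which together with the elementary $\ell^p\le\ell^1$ inequality for nonnegative sequences delivers the required bound.
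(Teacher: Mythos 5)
Your proof is correct, but note that the paper does not prove Proposition~\ref{prop::dreher} at all: it is quoted from \cite{dreher}, and your argument follows a genuinely different route from the proof given there. Dreher and J\"ungel work directly with the piecewise constant functions: they bound the time translates $\|u_\tau(\cdot+h)-u_\tau\|$ in $Y$ by the discrete difference quotient, combine this with the Ehrling inequality $\|v\|_B\le\eta\|v\|_X+C_\eta\|v\|_Y$, and conclude via Simon's characterization of relatively compact subsets of $L^p(0,T;B)$; the stated purpose of their lemma is precisely to spare the user the auxiliary interpolant. You instead run the classical two-interpolant argument: the piecewise linear interpolant $\tilde u_\tau$ is bounded in $L^p(0,T;X)\cap W^{1,1}(0,T;Y)$, hence relatively compact in $L^p(0,T;B)$ by the Aubin--Lions--Simon lemma (with $r=1$ you genuinely need Simon's refinement, since the original Aubin--Lions statement requires $r>1$ --- worth citing precisely), while $u_\tau-\tilde u_\tau$ is $O(\tau^{1/p})$ in $L^p(0,T;Y)$ and merely bounded in $L^p(0,T;X)$, hence small in $L^p(0,T;B)$ by Ehrling; your computations here, including the identity $\int_{t_{k-1}}^{t_k}((t_k-t)/\tau)^p\,\d t=\tau/(p+1)$ and the elementary $\sum_k a_k^p\le(\sum_k a_k)^p$, are sound, and the bookkeeping on the first subinterval (where $\tilde u_\tau\equiv u_1$, so its derivative contributes nothing and the shifted norm indeed matches $\sum_{k\ge 2}\|u_k-u_{k-1}\|_Y$) is consistent. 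What each approach buys: yours is short and transparent once the continuous-time compactness theorem is granted; the cited proof is self-contained at the level of step functions and avoids constructing and comparing two interpolants. One minor caveat: your concluding step treats sequences with $\tau_n\to 0$, which is exactly how the proposition is used in Lemma~\ref{lem::limit}; for $\tau$ bounded away from zero the comparison with $\tilde u_\tau$ yields nothing, though compactness in that regime is elementary.
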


A simple consequence of Proposition~\ref{prop::dreher} is the following result.

\begin{lemma}\label{lem::limit}
 There exists a nonnegative function $n\in W^{1,1}(0,T;V^*)$ such that the following convergences hold true for some subsequence of $\{n^{(\tau)}\}$:
 \begin{align*}
  \big( n^{(\tau)} - \zeta_\tau n^{(\tau)} \big)/\tau \rightharpoonup \partial_t n\;\;\mbox{in}\; L^{1+\delta}(0,T;V^*),\quad
  n^{(\tau)} \rightharpoonup n\;\;\mbox{in}\; L^{1+\delta}(0,T;H^2(\Omega)), \\
  n^{(\tau)}\to n\quad\mbox{and}\quad\nabla n^{(\tau)}\to \nabla n\;\;\mbox{a.e.~in}\;(0,T)\times\Omega, \hspace*{6.5em}
 \end{align*}
 for $\tau\to 0$, where $\rightharpoonup$ denotes the weak convergences in their respective spaces.
\end{lemma}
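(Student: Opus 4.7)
The plan is to combine the reflexivity of the $L^{1+\delta}$ function spaces with the nonlinear Aubin--Simon compactness of Proposition~\ref{prop::dreher}. Since both $L^{1+\delta}(0,T;H^2(\Omega))$ and $L^{1+\delta}(0,T;V^*)$ are reflexive, the uniform bounds of Lemma~\ref{lem::uniform} yield, along a common subsequence (not relabelled), weak limits $n\in L^{1+\delta}(0,T;H^2(\Omega))$ and $u^*\in L^{1+\delta}(0,T;V^*)$ such that
\[
 n^{(\tau)} \rightharpoonup n \;\;\mbox{in}\;\; L^{1+\delta}(0,T;H^2(\Omega)),\quad \frac{n^{(\tau)} - \zeta_\tau n^{(\tau)}}{\tau} \rightharpoonup u^* \;\;\mbox{in}\;\; L^{1+\delta}(0,T;V^*),
\]
the difference quotient being extended by zero on $(0,\tau)$.

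I would next apply Proposition~\ref{prop::dreher} with the triple $(X,B,Y) = (H^2(\Omega),W^{1,4}(\Omega),V^*)$ and exponents $p=1+\delta$, $r=1$. For $d\le 3$ the Sobolev embedding $H^2(\Omega) \hookrightarrow W^{1,4}(\Omega)$ is compact, while the continuous embedding $V \hookrightarrow L^\infty(\Omega)$ yields $L^1(\Omega) \hookrightarrow V^*$ by duality and hence $W^{1,4}(\Omega) \hookrightarrow V^*$ continuously. The hypotheses of Proposition~\ref{prop::dreher} are thus verified and $\{n^{(\tau)}\}$ is relatively compact in $L^{1+\delta}(0,T;W^{1,4}(\Omega))$; a further subsequence converges strongly to $n$ in that space. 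Extracting once more produces the pointwise a.e.\ convergences $n^{(\tau)} \to n$ and $\nabla n^{(\tau)} \to \nabla n$ on $(0,T)\times\Omega$, and the nonnegativity of $n$ is inherited from $n^{(\tau)}\ge 0$.

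It remains to identify $u^* = \partial_t n$ distributionally. For $\eta \in \mathcal{C}_c^\infty(0,T)$ and $\psi \in V$, a discrete integration by parts via the substitution $t \mapsto t+\tau$ gives
\[
 \int_\tau^T\!\!\Big\langle \frac{n^{(\tau)}(t) - \zeta_\tau n^{(\tau)}(t)}{\tau}, \psi \Big\rangle \eta(t)\,\d{t} = -\int_0^{T-\tau}\!\!\langle n^{(\tau)}(t),\psi\rangle\,\frac{\eta(t+\tau) - \eta(t)}{\tau}\,\d{t},
\]
the time-boundary contributions vanishing as soon as $\tau < \mbox{dist}(\mbox{supp}\,\eta,\{0,T\})$. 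Since $(\eta(\cdot+\tau) - \eta)/\tau \to \eta'$ uniformly and $n^{(\tau)} \rightharpoonup n$ weakly in $L^{1+\delta}(0,T;L^2(\Omega))$, passing $\tau \to 0$ yields $\int_0^T \langle u^*,\psi\rangle \eta\,\d{t} = -\int_0^T \langle n,\psi\rangle\eta'\,\d{t}$, so that $u^* = \partial_t n$, and the embedding $L^{1+\delta}(0,T;V^*) \hookrightarrow L^1(0,T;V^*)$ delivers $n \in W^{1,1}(0,T;V^*)$. The main subtlety is the choice of intermediate space $B$: it must simultaneously be compactly dominated by $H^2(\Omega)$, continuously embedded in $V^*$, and still fine enough to transport the strong convergence onto the gradients $\nabla n^{(\tau)}$, which is why $W^{1,4}(\Omega)$ (rather than just $L^2(\Omega)$) is needed.
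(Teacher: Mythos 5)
Your proposal is correct and follows essentially the same route as the paper: weak compactness from reflexivity of the $L^{1+\delta}$ Bochner spaces, the nonlinear Aubin lemma (Proposition~\ref{prop::dreher}) with the compact embedding $H^2(\Omega)\hookrightarrow W^{1,4}(\Omega)$ to get strong and then a.e.\ convergence of $n^{(\tau)}$ and $\nabla n^{(\tau)}$, and identification of the weak limit of the difference quotients with $\partial_t n$ by a discrete integration by parts against $\psi\,\eta$ with $\psi\in V$, $\eta\in\mathcal{C}_c^\infty(0,T)$. Your explicit verification of the triple $(X,B,Y)=(H^2(\Omega),W^{1,4}(\Omega),V^*)$ is a welcome (if only implicit in the paper) detail, but otherwise the arguments coincide.
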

\begin{proof}
 Since the Bochner spaces $L^{1+\delta}(0,T;V^*)$ and $L^{1+\delta}(0,T;H^2(\Omega))$, with $\delta>0$ are reflexive, they are weakly sequentially compact. Due to Lemma~\ref{lem::uniform}, we obtain a subsequence of $\{n^{(\tau)}\}$ (not relabeled) and some $n\in L^{1+\delta}(0,T;H^2(\Omega))$ such that the second convergence holds true. Furthermore, Proposition~\ref{prop::dreher} provides yet another subsequence of $\{n^{(\tau)}\}$ such that
 \[
  n^{(\tau)}\to n\;\;\mbox{in}\;L^{1+\delta}(0,T;W^{1,4}(\Omega)),
 \]
 due to the compact embedding $H^2(\Omega)\hookrightarrow W^{1,4}(\Omega)$. In particular, we may extract a subsequence such that $n^{(\tau)}\to n$ and $\nabla n^{(\tau)}\to \nabla n$ almost everywhere. Moreover, since $n^{(\tau)}\ge 0$ uniformly almost everywhere for all $\tau$, the limit function also satisfies $n\ge 0$ almost everywhere.

 In a similar way, we obtain some $\chi\in L^{1+\delta}(0,T;V^*)$ such that
 \[
  \big( n^{(\tau)} - \zeta_\tau n^{(\tau)} \big)/\tau \rightharpoonup \chi\;\;\mbox{in}\; L^{1+\delta}(0,T;V^*)
 \]
 for some subsequence. In order to identify this limit with the time derivative of $n$, we multiply $n$ with arbitrary functions $v\in V$ and $\varphi\in \mathcal{C}_0^{\infty}(0,T)$, integrate over time and space, and integrating by parts w.r.t.~time, we obtain
 \begin{align*}
  \int_0^T \langle n,v\rangle_V\varphi_t\,\d{t} &= \lim_{\tau\to 0} \int_0^{T-\tau} \langle n^{(\tau)},v\rangle_V\frac{\varphi(t)-\varphi(t+\tau)}{\tau}\,\d{t} \\
  &= \lim_{\tau\to 0} \int_\tau^T \langle\frac{n^{(\tau)}-\zeta_\tau n^{(\tau)}}{\tau},v\rangle_V\varphi\,\d{t} = \int_0^T \langle\chi(t),v\rangle_V\varphi(t)\,\d{t}.
 \end{align*}
 Since the set $\{\varphi\,v\;|\; v\in V,\,\varphi\in\mathcal{C}_0^\infty(0,T)\}$ is dense in $L^q(0,T;V)$, $q = (1+\delta)/\delta$, we have by definition of the generalized time derivative that $\partial_t n=\chi$.
\end{proof}

\begin{lemma}\label{lem::limit_rho}
 There exists a nonnegative function $\rho=\sqrt{n}\in L^2(0,T;V)$ such that the following convergences hold true for some subsequence of $\{\rho^{(\tau)}\}$:
 \begin{align*}
  \rho^{(\tau)} \to \rho\;\;\mbox{in}\;L^{2\eta(\delta)}(0,T;W^{1,4}(\Omega)),\hspace*{2.6em}\\
  \rho^{(\tau)}\to \rho\quad\mbox{and}\quad \nabla\rho^{(\tau)}\to \nabla\rho\;\;\mbox{a.e.~in}\;(0,T)\times\Omega,
 \end{align*}
 for $\tau\to 0$, with the same $\delta>0$ as in Lemma~\ref{lem::limit}.
\end{lemma}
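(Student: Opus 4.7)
The plan is to establish strong compactness of $\{\rho^{(\tau)}\}$ via the Aubin--Simon theorem combined with the elementary inequality $(\sqrt{a}-\sqrt{b})^2\leq|a-b|$. First, the estimate~(\ref{lem::uniform::eq::0}) together with the normalization $\|\rho^{(\tau)}\|_{L^\infty(0,T;L^2(\Omega))}=1$ (from mass conservation) yields a uniform bound of $\{\rho^{(\tau)}\}$ in $L^2(0,T;H^2(\Omega))$, and the Gagliardo--Nirenberg step carried out in (\ref{lem::uniform::eq::2}) refines this to a uniform bound in $L^{2\eta(\delta'')}(0,T;W^{1,4}(\Omega))$ for every $\delta''\in[0,\delta_0]$.

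Second, for the time-translation equicontinuity I apply $(\sqrt{a}-\sqrt{b})^2\leq|a-b|$ pointwise in $(t,x)$ to get
\[
 \|\rho^{(\tau)}(\cdot+h)-\rho^{(\tau)}\|_{L^2(0,T-h;L^2(\Omega))}^{2}\leq\|n^{(\tau)}(\cdot+h)-n^{(\tau)}\|_{L^1(0,T-h;L^1(\Omega))}.
\]
Since $n^{(\tau)}\to n$ in $L^{1+\delta}(0,T;W^{1,4}(\Omega))\hookrightarrow L^1(0,T;L^1(\Omega))$ by Lemma~\ref{lem::limit}, the Kolmogorov--Riesz--Fr\'echet criterion forces the right-hand side to vanish uniformly in $\tau$ as $h\to 0$. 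Together with the compact embedding $H^2(\Omega)\hookrightarrow\hookrightarrow W^{1,4}(\Omega)$ for $d\leq 3$ and the uniform $L^2(0,T;H^2)$ bound, the Aubin--Simon theorem then yields relative compactness of $\{\rho^{(\tau)}\}$ in $L^2(0,T;W^{1,4}(\Omega))$. The a.e.\ convergence $n^{(\tau)}\to n$ and continuity of $\sqrt{\,\cdot\,}$ identify the (unique) strong limit as $\rho:=\sqrt{n}$, so the full sequence converges in $L^2(0,T;W^{1,4}(\Omega))$.

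Third, I upgrade to $L^{2\eta(\delta)}(0,T;W^{1,4})$ by Lebesgue interpolation in time: fixing some $\delta''\in(\delta,\delta_0]$ and using that $\{\rho^{(\tau)}-\rho\}$ is bounded in $L^{2\eta(\delta'')}(0,T;W^{1,4})$ while tending to zero in $L^2(0,T;W^{1,4})$, I obtain strong convergence in the intermediate space $L^{2\eta(\delta)}(0,T;W^{1,4})$. Extracting a further subsequence yields $\rho^{(\tau)}\to\rho$ and $\nabla\rho^{(\tau)}\to\nabla\rho$ a.e.\ in $(0,T)\times\Omega$; the weak $L^2(0,T;H^2)$ limit together with $\partial_\nu\rho^{(\tau)}=0$ passing to the trace limit gives $\rho\in L^2(0,T;V)$, and $\rho\geq 0$ is automatic. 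The main obstacle is producing time-compactness for $\rho^{(\tau)}$ itself: the discrete-derivative estimate of Lemma~\ref{lem::uniform} is only available for $n^{(\tau)}$, and the naive formula $\nabla\rho=\nabla n/(2\sqrt{n})$ degenerates on $\{n=0\}$, so that a.e.\ convergence of $\nabla\rho^{(\tau)}$ cannot be inferred directly from the a.e.\ convergence of $\nabla n^{(\tau)}$ supplied by Lemma~\ref{lem::limit}. The inequality $(\sqrt{a}-\sqrt{b})^2\leq|a-b|$ is exactly the tool that bypasses this vacuum singularity, trading a squared $L^2$-modulus of continuity in $\rho^{(\tau)}$ for a first-power $L^1$-modulus of continuity in $n^{(\tau)}$, which is precisely what Lemma~\ref{lem::limit} controls.
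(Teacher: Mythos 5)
Your proof is correct, but it takes a genuinely different route from the paper's. The paper argues softly: it extracts a weak limit of $\rho^{(\tau)}$ in $L^2(0,T;V)$, identifies $\rho=\sqrt{n}\ge 0$ from the a.e.\ convergence $n^{(\tau)}\to n$, reads off a.e.\ convergence of the gradients from $2\rho^{(\tau)}\nabla\rho^{(\tau)}=\nabla n^{(\tau)}\to\nabla n=2\rho\nabla\rho$ a.e.\ (which requires dividing by $\rho^{(\tau)}$ and is delicate exactly on the vacuum set $\{\rho=0\}$ that you single out; there the paper's one-line identification needs an extra argument), and then upgrades to strong convergence in $L^{2\eta(\delta)}(0,T;W^{1,4}(\Omega))$ by a dominated-convergence/Vitali step based on the uniform bounds (\ref{lem::uniform::eq::1})--(\ref{lem::uniform::eq::2}). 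You instead manufacture genuine strong compactness of $\rho^{(\tau)}$ itself: the elementary inequality $(\sqrt{a}-\sqrt{b})^2\le|a-b|$ converts the strong $L^1((0,T)\times\Omega)$ convergence of $n^{(\tau)}$ (available from Lemma~\ref{lem::limit} via a.e.\ convergence plus the uniform $L^{1+\delta}$ bound, or from the strong $W^{1,4}$-convergence in its proof) into a uniform-in-$\tau$ $L^2$ time-translation estimate for $\rho^{(\tau)}$, and Simon's theorem with $H^2(\Omega)\hookrightarrow\hookrightarrow W^{1,4}(\Omega)\hookrightarrow L^2(\Omega)$ then yields strong convergence in $L^2(0,T;W^{1,4}(\Omega))$, hence a.e.\ convergence of $\nabla\rho^{(\tau)}$ along a subsequence, without ever dividing by $\rho$. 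Your route is somewhat longer but bypasses the vacuum-set issue cleanly and makes the time-equicontinuity of $\rho^{(\tau)}$ explicit, at the price of invoking the Kolmogorov--Riesz/Simon machinery a second time. Note finally that both arguments share the same endpoint caveat in the exponent upgrade: your interpolation needs some $\delta''>\delta$, i.e.\ $\delta<\delta_0$ strictly, just as the paper's ``dominated convergence'' is really Vitali and needs uniform integrability coming from a strictly larger exponent; since $\delta$ may be taken small and only such $\delta$ is used in the proof of Theorem~\ref{thm::existence}, this is harmless, but it deserves a sentence.
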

\begin{proof}
 Due to the boundedness of $\{\rho^{(\tau)}\}$ in $L^2(0,T;V)$, we obtain a weakly convergent subsequence (not relabeled) and some $\rho$ such that $\rho^{(\tau)}\rightharpoonup \rho$ in $L^2(0,T;V)$. Since $n^{(\tau)}\to n$ almost everywhere, so does $\rho^{(\tau)}=\sqrt{n^{(\tau)}}\to\sqrt{n}$ almost everywhere, which allows us to identify $\rho=\sqrt{n}\ge 0$ almost everywhere. Furthermore, we have that $\nabla\rho^{(\tau)}\to \nabla\rho$ almost everywhere. Indeed, this follows from
 \[
  2\rho^{(\tau)}\nabla\rho^{(\tau)} = \nabla n^{(\tau)} \to \nabla n = 2\rho\nabla\rho\;\;\mbox{a.e.~in}\;(0,T)\times\Omega.
 \]
 Hence, due to the boundedness of the sequence $\{\rho^{(\tau)}\}\subset L^{2\eta(\delta)}(0,T;W^{1,4}(\Omega))$ given by (\ref{lem::uniform::eq::1}) and (\ref{lem::uniform::eq::2}), along with the almost everywhere convergence of the sequence, we may apply the Lebesgue's dominated convergence in order to obtain strong convergence in the asserted space.
\end{proof}

With the preceding results, we may now pass to the limit $\tau\to 0$ in (\ref{eq::evolution}).

 {\em Proof of Theorem~\ref{thm::existence}}.
 Let us first establish a weak convergence for $\{F^{(\tau)}\rho^{(\tau)}\}$ and $\{\Phi^{(\tau)}\}$. Due to estimate (\ref{lem::uniform::eq::3}) we obtain $\chi\in L^2(0,T;L^2(\Omega))$ such that
 \[
  F^{(\tau)}\rho^{(\tau)} \rightharpoonup \chi\;\;\mbox{in}\;L^2(0,T;L^2(\Omega)).
 \]
 for some subsequence (not relabeled). Due to the almost everywhere convergence for $\rho^{(\tau)}$ given in Lemma~\ref{lem::limit_rho}, we may define $F$ such that $F\rho = \chi$ almost everywhere, where we set $F(t,x)=0$ when $\rho(t,x)=0$. As for $\{\Phi^{(\tau)}\}$, we obtain from Proposition~\ref{prop::H^2} the a priori estimate $\|\Phi\|_{2,2}\le c\|n\|_2$, which directly gives us the uniform bound for $\{\Phi^{(\tau)}\}$. Therefore, there exists some subsequence and $\Phi$ such that
 \[
  \Phi^{(\tau)} \rightharpoonup \Phi\;\;\mbox{in}\; L^{1+\delta}(0,T;V_0).
 \]
 Now, by multiplying (\ref{eq::evolution}) with the appropriate test functions, integrating over time and space, integrating by parts, and passing to the limit $\tau\to 0$, we finally obtain
 \begin{align*}
   \langle \partial_t n,\varphi_1\rangle &= \langle (F\rho)2\nabla \rho,\nabla\varphi_1\rangle + \langle (F\rho)\rho,\Delta\varphi_1\rangle \quad\forall \varphi_1\in L^q(0,T;V),\\
   \langle F\rho,\varphi_2\rangle &= \langle -\e^2\Delta \rho + 2\rho\log \rho - \s\rho\Phi,\varphi_2\rangle \quad\hspace*{1.3em}\forall \varphi_2\in L^2(0,T;L^2(\Omega)),\\
   \langle \nabla\Phi,\nabla\varphi_3\rangle &= \langle n,\varphi_3\rangle \quad\hspace*{11.15em}\forall\varphi_3\in H_0^1(\Omega),
 \end{align*}
 with $q=(1+\delta)/\delta$, which completes the proof.\quad\endproof

\appendix

\section{Proof of Proposition~\ref{prop::LlogL}}\label{appendix::LlogL}
 Without loss of generality, $\beta=0$. Otherwise, we simply make the shift $u' = u-\beta \in W_0$ and proceed with the proof for $u'$. From Poincar\'e's inequality
 \[
  \|u - \frac{1}{|\Omega|} \int_\Omega u \,\d{x}\|_p \le c_p \|\nabla u\|_p\quad\mbox{for any}\;\; u\in W^{1,p},
 \]
 we deduce that the norms $\|\.\|_{W_0}$ and $\|\.\|_{1,\frac{d}{d-1}}$ are equivalent. 

 We know from standard elliptic theory that a unique weak solution $u\in W_0$ of (\ref{eq::LlogL}) exists when $f\in L^2(\Omega)$ due to the Lax--Milgram theorem, the Poincar\'e inequality, and the continuous embedding $H^1(\Omega)\hookrightarrow W_0$. Since $L^2(\Omega)\hookrightarrow L\log L(\Omega)$ is dense, it is sufficient to show, due to the BLT theorem, that the solution operator $S\colon f\mapsto u$ is bounded with respect to the norms $\|\.\|_{L\log L(\Omega)}$ and $\|\.\|_{W_0}$ respectively.

 For $\phi(s):=\mbox{sign}(s)\log(1 + |s|)$ we set
 \[
  \Phi(s) := (1+|s|)\big(\log(1+|s|)-1\big), \quad\mbox{and}\quad \theta(s) := 2(1+|s|)^{1/2}.
 \]
 We define $\Psi(r)$ as the conjugate convex function of $\Phi(s)$, i.e., 
 \[
  \Psi(r):=\sup_s \big( rs - \Phi(s) \big),
 \]
 where the supremum is attained if and only if $r=\phi(s)$ \cite{krasno}. We first observe that $|u|\in W^{1,\frac{d}{d-1}}(\Omega)$ for any $u\in W^{1,\frac{d}{d-1}}(\Omega)$ \cite{kinder}. Moreover, we have the estimate
 \[
  \|\nabla|u|\|_{\frac{d}{d-1}} \le \|1 + |u|\|_{\frac{d}{d-2}}^{1/2}\|\nabla\theta(u)\|_2 \le \frac{\delta}{2}\|u\|_{\frac{d}{d-2}} + \frac{1}{2\delta}\|\nabla\theta(u)\|_2^2 + \frac{\delta}{2}|\Omega|^{(d-2)/d},
 \]
 for any $\delta>0$. Clearly, this follows directly from H\"older's and Young's inequality. Due to the Sobolev embedding $W^{1,\frac{d}{d-1}}(\Omega)\hookrightarrow L^{\frac{d}{d-2}}(\Omega)$ as well as the norm equivalence $\|\.\|_{W_0}\sim\|\.\|_{1,\frac{d}{d-1}}$, we may choose an appropriate $\delta>0$ to further obtain
 \begin{equation}\label{eq::LlogL_bound}
  \|\nabla|u|\|_{\frac{d}{d-1}} \le \tilde{c}\|\nabla\theta(u)\|_2^2 + \frac{1}{2\tilde{c}}|\Omega|^{(d-2)/d}\quad\mbox{for any}\;\;u\in W_0,
 \end{equation}
 for some constant $\tilde{c}>0$, depending only on $d$ and $|\Omega|$. 
 
 Now, by testing equation (\ref{eq::LlogL}) with $\phi(u)$, we obtain
 \[ 
  \alpha \int_\Omega |\nabla \theta(u)|^2\,\d{x} \le \int_\Omega \Phi(kf)\,\d{x} + \int_\Omega \Psi(\frac{1}{k}\phi(u))\,\d{x},
 \]
 which holds for any $k>0$. Since $\Phi$ and $\Psi$ are convex with $\Phi(0)=\Psi(0)=0$, we have $\Phi(k s) \le k\Phi(s)$ and $\Psi(k^{-1}\phi(s))\le k^{-1}\Psi(\phi(s)) = k^{-1}(s\phi(s) - \Phi(s))$. Together with (\ref{eq::LlogL_bound}) we get
 \[
  \|u\|_{W_0} \le \frac{\tilde{c}k}{\alpha}\int_\Omega \Phi(f)\,\d{x} + \frac{1}{\alpha \tilde{c}k}\int_\Omega \big((1+|u|) - \log(1+|u|)\big)\,\d{x} + \frac{1}{2\tilde{c}}|\Omega|^{(d-2)/d}.
 \]
 Using the Sobolev embedding $W^{1,\frac{d}{d-1}}(\Omega)\hookrightarrow L^1(\Omega)$ and choosing $k>0$ appropriately finally leads to the estimate
 \[
  \|S f\|_{W_0} = \|u\|_{W_0} \le c\Big( \|f\|_{L\log L(\Omega)} + M(|\Omega|)\Big)\quad\mbox{for all}\;\;f\in L^2(\Omega),
 \]
 where $c>0$ is a constant depending only on $d$, $|\Omega|$ and $\alpha$, and $M\colon\rr_+\to\rr_+$ is a monotonically increasing function of $|\Omega|$. Consequently, we may extend the solution operator $S$ to $\hat{S}\colon L\log L(\Omega)\to W_0$, which establishes the existence of solutions for (\ref{eq::LlogL}). Uniqueness follows by standard arguments involving the superposition and maximum principle, and thereby concludes the proof. \quad\endproof

\section{Proof of Proposition~\ref{prop::n2inequality}}\label{appendix::n2inequality}
From Proposition~\ref{prop::gagliardo}, we obtain
 \[
  \|u\|_4 \le c_1\|u\|_{1,2}^{\frac{d}{4}}\|u\|_2^{\frac{4-d}{4}}.
 \]
 On the other hand, by setting $v=\sqrt{u}\in H^2(\Omega)$ with $\sqrt{k}\le v\le 1/\sqrt{k}$, we obtain
 \[
  \|u\|_4^{\frac{1}{2}}=\|v\|_8 \le c_2\|v\|_{1,4}^{\frac{d}{8}}\|v\|_4^{\frac{8-d}{8}} = c_2\|v\|_{1,4}^{\frac{d}{8}}\|u\|_2^{\frac{8-d}{16}},
 \]
 and therefore 
 \[
  \|u\|_4 \leq c_2^2\|v\|_{1,4}^{\frac{d}{4}}\|u\|_2^{\frac{8-d}{8}}.
 \]
 Now let $\alpha\in(0,1)$ for $d=2$, $\alpha\in(1/3,1)$ for $d=3$, and
 \begin{align*}
  \|u\|_4^{2(1-\alpha)} &\le c_1^{2(1-\alpha)}\|u\|_{1,2}^{\frac{(1-\alpha)d}{2}}\|u\|_2^{\frac{(1-\alpha)(4-d)}{2}},\\ 
  \|u\|_4^{2(1+\alpha)} &\le c_2^{4(1+\alpha)}\|v\|_{1,4}^{\frac{(1+\alpha)d}{2}}\|u\|_2^{\frac{(1+\alpha)(8-d)}{4}}.
 \end{align*}
 Putting them together and applying H\"older's and Young's inequalities yield,
 \begin{align*}
  \|u\|_4^4 &\le c_1^{2(1-\alpha)}c_2^{4(1+\alpha)}\|u\|_2^{\frac{16-(3-\alpha)d}{4}}\|v\|_{1,4}^{\frac{(1+\alpha)d}{2}}\|u\|_{1,2}^{\frac{(1-\alpha)d}{2}} \\
  &\le \delta_1\|v\|_{1,4}^4 + \delta_2\|u\|_{1,2}^2 + \tilde{c}(\delta_1,\delta_2,\alpha,\|u\|_2) \\
  &= \delta_1\int_\Omega u^2|\nabla\log u|^4\,\d{x} + \delta_2\int_\Omega |\nabla u|^2\,\d{x} + c(\delta_1,\delta_2,\alpha,\|u\|_2),
 \end{align*}
 which is the required inequality.\quad\endproof

\section{Proof of Proposition~\ref{prop::dummy}}\label{appendix::dummy}
 To show the assertion, we employ the method introduced in \cite{matthes} (c.f.~\cite{matthes2}). We introduce the functions
 \[
  \xi = \frac{|\nabla\rho|}{\rho},\quad \eta=\frac{1}{d}\frac{\Delta\rho}{\rho},\quad (\eta + \mu)\xi^2 = \frac{1}{\rho^3}\nabla^2\rho\!:\!(\nabla\rho)^2,
 \]
 where $A\!:\!(b)^2 = \sum_{i,j=1}^d a_{ij}b_ib_j$ for $A=(a_{ij})\in \rr^{d\times d}$, $b\in\rr^d$, and define $\varrho\ge 0$ by
 \[
  \left|\frac{\nabla^2\rho}{\rho}\right|^2 = d\eta^2 + \frac{d}{d-1}\mu^2 + \varrho^2,
 \]
 which exists due to \cite[Lemma~3]{matthes}. Now set the functionals
 \begin{align*}
  \mathcal{J}(\rho)&=\int_\Omega \rho^2\left[\left|\frac{\nabla^2\rho}{\rho}\right|^2 + \frac{\Delta\rho}{\rho}\left|\frac{\nabla\rho}{\rho}\right|^2 - \delta\left|\frac{\nabla\rho}{\rho}\right|^4\right]\d{x} \\
  &= \int_\Omega \rho^2\left[\left(d\eta^2 + \frac{d}{d-1}\mu^2 + \varrho^2\right) + d\eta\xi^2 - \delta\xi^4\right]\d{x}, \\
  \mathcal{K}(\rho)&= \int_\Omega |\nabla^2\rho|^2\,\d{x} = \int_\Omega \rho^2\left(d\eta^2 + \frac{d}{d-1}\mu^2 + \varrho^2\right)\d{x},
 \end{align*}
 and the dummy integral expression
 \[
  \mathcal{I}(\rho) = \int_\Omega \div\left( \rho^{-1}|\nabla\rho|^2\nabla\rho\right)\d{x} = \int_\Omega \rho^2\big((d+2)\eta\xi^2 + 2\mu\xi^2 - \xi^4\big)\,\d{x},
 \]
 which is zero due to the boundary condition $\partial_\nu \rho=0$ on $\Gamma$. The objective is to find constants $c_0$, $c_1>0$ such that $\mathcal{J}-c_0\mathcal{K}=\mathcal{J}-c_0\mathcal{K}+c_1\mathcal{I}\ge 0$, and in such a way that $c_0$ is chosen to be as large as possible. Summing the functionals up as described, 
 \begin{equation}\label{lem::discrete::eq::functional}
  \qquad(\mathcal{J}-c_0\mathcal{K}+c_1\mathcal{I})(\rho) = \int_\Omega \rho^2\left[(1-c_0)d\eta^2 + (d+c_1(d+2))\eta\xi^2 + Q(\mu,\xi,\varrho)\right]\d{x},
 \end{equation}
 where $Q$ is the polynomial in $\mu,\xi$ and $\varrho$ given by
 \[
  Q(\mu,\xi,\varrho) = (1-c_0)\frac{d}{d-1}\mu^2 + 2c_1\mu\xi^2 - (c_1 + \delta)\xi^4 + (1-c_0)\varrho^2.
 \]
 By choosing $c_1=-d/(d+2)$, the second term on the right hand side of (\ref{lem::discrete::eq::functional}) vanishes. We write $Q(\mu,\xi,\varrho)= b_1\mu^2 + 2b_2\mu\xi^2 + b_3\xi^4 + b_4\varrho^2$, where
 \[
  b_1=\frac{(1-c_0)d}{d-1},\quad b_2=-\frac{d}{d+2},\quad b_3=\frac{d}{d+2}-\delta,\quad b_4=(1-c_0),
 \]
 and demand that $Q\ge 0$ for any given $\mu,\xi,\varrho$. If $c_0\le 1$, then $b_4\ge 0$. Now we choose $c_0\le 1$ in such a way that the remaining terms in $Q$ is nonnegative as well. This is the case if $b_1>0$ and $b_1b_3 - b_2^2\ge 0$. The second condition is equivalent to
 \[
  (1-c_0)(d+2)(1-(d+2)\delta/d) - (d-1)\ge 0,
 \]
 and therefore
 \[
  c_0 \le 1-\frac{d-1}{(d+2)(1-(d+2)\delta/d)} < 1-\frac{d-1}{d+2}=\frac{3}{d+2},
 \]
 for $\delta$ chosen sufficiently small. Choosing such a $c_0\in(0,3/(d+2))$, and using again the inequality $\|\nabla^2\rho\|_2\le \|\Delta \rho\|_2$, we obtain
 \[
  (\mathcal{J}-c_0\mathcal{K})(\rho) \ge \int_\Omega \rho^2(1-c_0)d\eta^2\,\d{x} = \frac{1-c_0}{d}\int_\Omega |\Delta\rho|^2\,\d{x} \ge \frac{1-c_0}{d}\mathcal{K}(\rho),
 \]
 which yields further, $\mathcal{J}(\rho) \ge \gamma\,\mathcal{K}(\rho)$, with $\gamma=(1 + (d-1)c_0)/d$.\quad\endproof
 
 \bibliographystyle{plain}
\bibliography{biblio} 

\end{document}